\documentclass[a4paper,11pt]{article} 
\usepackage{amsthm}
\usepackage{amsthm, amsmath, amssymb, url,tikz,enumerate}
\usepackage{hyperref}
\usepackage{pgfplots}
\pgfplotsset{compat=newest}
\usepackage{geometry}
\usepackage{mathrsfs}
\usepackage{caption}
\usepackage{graphicx}
\newtheorem{theorem}{Theorem}[section]
\newtheorem{lemma}[theorem]{Lemma}

\newtheorem{corollary}[theorem]{Corollary}

\usepackage[numbers,sort&compress]{natbib}

\title{Sharp upper bounds on the $A_\alpha$-spectral radius of graphs}

\author{Zhen-Mu Hong$^{\,\rm a}$, Zheng-Jiang Xia$^{\,\rm b}$\thanks{Corresponding author. E-mail addresses:  zmhong@mail.ustc.edu.cn (Z.-M. Hong), xzj@mail.ustc.edu.cn (Z.-J. Xia), zhiqiao@sicnu.edu.cn (Z. Qiao).}, Zhi Qiao$^{\,\rm c}$\\
{\footnotesize$^{\rm a}$School of Statistics and Applied Mathematics, Anhui University of Finance \& Economics,} \\{\footnotesize Bengbu 233030, China}\\
{\footnotesize$^{\rm b}$School of Finance, Anhui University of Finance \& Economics, Bengbu 233030, China}\\
{\footnotesize$^{\rm c}$School of Mathematical Sciences, Sichuan Normal University, Chengdu 610068, China}\\
}

\date{}

\begin{document}

\maketitle

\parbox{14.5cm}{
\begin{center}
\textbf{Abstract}
\end{center}

Let $G$ be a simple graph with degree diagonal matrix $D(G)$ and adjacency matrix $A(G)$. The signless Laplacian matrix of $G$ is defined as $Q(G)=D(G)+A(G)$. For a real number $\alpha \in [0, 1]$, Nikiforov (2017) proposed the $A_\alpha$-matrix of a graph $G$ as $A_{\alpha}(G)=\alpha D(G)+(1-\alpha)A(G)$. The $A_\alpha$-spectral radius of $G$, denoted by $\rho_\alpha(G)$, is the largest eigenvalue of $A_\alpha(G)$, where $\rho_0(G)=\rho(G)$ is the spectral radius of $A(G)$ and $2\rho_{\frac{1}{2}}(G)=q(G)$ is the spectral radius of $Q(G)$.  
Sun and Das (2020) proved that for any non-isolated vertex $v$ of degree $d_v$, $\rho^2(G)-\rho^2(G-v) \leq 2 d_v-1$, 
which confirmed the conjecture originally posed by Guo, Wang, and Li (2019). Recently, Liu and Ning (2026) provided a short and self-contained proof of this inequality. In this paper, we establish the corresponding result for $\rho_\alpha(G)$. As a corollary, for every $k\in [0,d_v+1]$, we have 
$$
\rho^2(G)- \rho^2(G-v) \leq 2d_v-1 +(k-2)\left(\frac{d_v}{\rho(G)}-1\right).
$$
This inequality coincides with that of Sun and Das when $k=2$, and is strictly sharper than theirs whenever $k\neq 2$ and $d_v\neq \rho(G)$. 
We also give a short proof of the inequality 
$\rho_{\alpha}(G)-\rho_{\alpha}(G-v)\leq \alpha +\frac{(1-\alpha)^2d_v}{\rho_{\alpha}(G)-\alpha d_v}$, which is obtained by Wang and She (2022). Moreover, we obtain a unified generalization of Hong, Shu and Fang's inequality for $\rho(G)$ and Nikiforov's inequality for $q(G)$ in terms of $\rho_\alpha(G)$.

\vskip0.4cm
\noindent{\bf Keywords:} $A_\alpha$-spectra; Spectral radius; Adjacency matrix; Signless Laplacian matrix

\vskip0.4cm \noindent {\bf AMS Subject Classification: }\ 05C50
}

\section{Introduction}

Let $G=(V(G),E(G))$ be a simple graph of order $n$ with vertex-set $V(G)$ and edge-set $E(G)$. For a vertex $v\in V(G)$, let $N(v)$ be the set of neighbors of $v$ in $G$, and  define the closed neighborhood $N[v]=N(v)\cup \{v\}$. The degree $d(v)$ of $v$ is defined as the cardinality of $N(v)$. For any subset $U\subset V(G)$, let $G[U]$ denote the subgraph of $G$ induced by $U$. For any $v\in V(G)$, we use $G-v$ to denote the graph obtained
from $G$ by deleting the vertex $v$ and all edges incident to $v$. 

The adjacency matrix $A(G)=(a_{ij})$ of $G$ is defined to be the $n\times n$ matrix $(a_{ij})$, where $a_{ij}=1$ if $v_iv_j\in E(G)$, and $a_{ij}=0$ otherwise. It is well known that the adjacency matrix is real and symmetric. The adjacency spectral radius of $G$, denoted by $\rho(G)$, is the spectral radius of $A(G)$. The signless Laplacian matrix of $G$ is defined as $Q(G)=D(G)+A(G)$, where $D(G)$ is the degree diagonal matrix of $G$. The signless Laplacian spectral radius $q(G)$ of $G$ is the spectral radius of $Q(G)$. 
In 2017, Nikiforov \cite{niki17} proposed the $A_\alpha$-matrix of a graph $G$ as 
$$
A_{\alpha}(G)=\alpha D(G)+(1-\alpha)A(G),
$$ 
for any $\alpha\in [0,1]$. The $A_\alpha$-spectral radius of $G$, denoted by $\rho_\alpha(G)$, is the largest eigenvalue of $A_\alpha(G)$, where $\rho_0(G)=\rho(G)$ is the adjacency  spectral radius of $G$ and $2\rho_{\frac{1}{2}}(G)=q(G)$ is the signless Laplacian spectral radius of $G$. Therefore, $A_{\alpha}(G)$ unifies the theories of the adjacency matrix $A(G)$ and the signless Laplacian matrix $Q(G)$. 

By the Perron Frobenius Theorem, the spectral radius of a graph $G$ does not increase when a vertex or an edge  
is removed from a graph $G$. Li et al. \cite{lwm12} and Mieghem et al. \cite{msk11} obtained some results on spectral radius perturbation under vertex and edge deletion. 
Motivated by Hong's inequality $\rho(G)\leq \sqrt{2m-n+1}$ \cite{hong88}, which holds for any simple connected graph $G$ with $n$ vertices and $m$ edges, Guo, Wang, and Li \cite{gwl19} conjectured that for any simple graph $G$ and non-isolated vertex $v$,
$$
\rho^2(G)-\rho^2(G-v)\leq 2d(v)-1.
$$
If this inequality holds, then it would
imply Hong's inequality by simple induction. Sun and Das \cite{sd20} confirmed this conjecture using the eigenvector corresponding to $\rho(G)$, and characterized all connected graphs for which this bound is attained. Utilizing matrix theory, Jin, Zhang and Zhang \cite{jzz24} obtained several upper bounds of the spectral radius of symmetric matrices, which yielded a new proof for the above equality. Recently, Liu and Ning \cite{ln26} provided a short and self-contained proof of this inequality using matrix analysis. 

\begin{theorem}{\rm (Sun and Das \cite{sd20}, Liu and Ning \cite{ln26})}\label{thm:sunliu}
Let $G$ be a simple graph and let $v \in V(G)$ be a non-isolated vertex of degree $d_v$. Then
$$
\rho^2(G)-\rho^2(G-v) \leq 2 d_v-1.
$$
If $G$ is connected, the equality holds if and only if either  $G \cong K_n$, or $G \cong K_{1, n-1}$ and $d_v=1$.
\end{theorem}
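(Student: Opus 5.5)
The plan is to compare $\rho(G)$ with $\rho(G-v)$ using the unit Perron vector of $A(G)$, following the Sun--Das and Liu--Ning approach. Write $\rho=\rho(G)$, $\rho'=\rho(G-v)$ and $d=d_v$. Order the vertices so that $v$ comes first:
$$A(G)=\begin{pmatrix}0&b^T\\ b&B\end{pmatrix},$$
where $B=A(G-v)$ and $b$ is the $0/1$ indicator vector of $N(v)$, so $b^Tb=d$. Let $x=(x_v,y)$ be a nonnegative unit eigenvector for $\rho$, and set $t=x_v^2$, so $\|y\|^2=1-t$. The eigen-equations are
$$\rho x_v=b^Ty,\qquad By=\rho y-x_v b.$$

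\textbf{Step 1 (an exact identity).} Compute $\|By\|^2$ from the second eigen-equation:
$$\|By\|^2=\rho^2(1-t)-2\rho x_v b^Ty+t d=\rho^2(1-3t)+dt.$$
Since $B$ is symmetric, $\|By\|^2\le \rho'^2\|y\|^2=\rho'^2(1-t)$. Rearranging gives
$$(\rho^2-\rho'^2)(1-t)\le t(2\rho^2-d),\qquad\text{i.e.}\qquad \rho^2-\rho'^2\le \frac{t(2\rho^2-d)}{1-t}.$$
So it suffices to show
$$t\le \frac{2d-1}{2\rho^2+d-1}. \tag{$*$}$$

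\textbf{Step 2 (bounding $x_v$).} This is the main obstacle. Cauchy--Schwarz in $\rho x_v=b^Ty$ gives $\rho^2t\le d(1-t)$, that is, $t\le d/(\rho^2+d)$. This implies $(*)$ exactly when $\rho\le d$, so the case $\rho\le d$ is done.

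When $\rho>d$ the first bound is not enough, and I would switch estimates. Use $y/\|y\|$ as a Rayleigh test vector for $B$:
$$\rho'\ \ge\ \frac{y^TBy}{1-t}=\frac{\rho(1-t)-x_v b^Ty}{1-t}=\rho\,\frac{1-2t}{1-t},$$
which gives
$$\rho^2-\rho'^2\le \rho^2\left(1-\frac{(1-2t)^2}{(1-t)^2}\right)=\frac{\rho^2\,t(2-3t)}{(1-t)^2}.$$
Substitute $t\le d/(\rho^2+d)$, which is small when $\rho>d$. The goal is then to verify $\rho^2 t(2-3t)\le (2d-1)(1-t)^2$ as a one-variable inequality in $\rho^2\ge d^2$.

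If this second estimate turns out too weak near $\rho\approx d$, the fallback is a sharper Cauchy--Schwarz. Split $\|y\|^2=\|y_{N(v)}\|^2+\|y_{\rm rest}\|^2$ and keep the slack $\|y_{\rm rest}\|^2$ in Step 1 through $\|By\|^2\le\rho'^2\|y\|^2$. Alternatively, take a convex combination of the two bounds, in the spirit of the corollary's parameter $k$ in the abstract.

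\textbf{Step 3 (equality for connected $G$).} Equality forces equality in the spectral bound $\|By\|^2\le\rho'^2\|y\|^2$, so $y$ is a Perron vector of $B$, and it also forces equality in the bound used for $t$. Equality in Cauchy--Schwarz makes $y$ constant on $N(v)$. Feeding these conditions back into the eigen-equations should force either every vertex to be adjacent to $v$ with $G-v$ regular and complete, giving $G\cong K_n$, or $G-v$ edgeless, giving the star with $d_v=1$. Finally, one checks directly that $K_n$ gives
$$(n-1)^2-(n-2)^2=2(n-1)-1,$$
and that $K_{1,n-1}$ with $d_v=1$ gives $(n-1)-(n-2)=1=2d_v-1$.
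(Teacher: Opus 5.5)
\textbf{The case $\rho>d$ is not handled.} Your Step 1 identity is correct, and so is the case $\rho\le d$: with $t\le d/(\rho^2+d)$ it gives $\rho^2-\rho'^2\le 2d-d^2/\rho^2$, which is at most $2d-1$ exactly when $\rho\le d$. The case $\rho>d$ is not marginal, since it contains the equality case $K_{1,n-1}$ with $d_v=1$ (there $\rho=\sqrt{n-1}>1$).

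Your second estimate cannot close this case. By Cauchy--Schwarz, $\|By\|^2\|y\|^2\ge (y^TBy)^2$. So the lower bound $\rho'^2\ge \|By\|^2/\|y\|^2$ from Step 1 always dominates the square of the Rayleigh bound $\rho'\ge y^TBy/\|y\|^2$, and the second estimate is never sharper than the first. Concretely, at $t=d/(\rho^2+d)$ you get $\rho^2t(2-3t)/(1-t)^2=2d-d^2/\rho^2>2d-1$ for every $\rho>d$. So the one-variable inequality you intend to verify is false.

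Your fallbacks do not help either. A convex combination of two bounds that both reduce to $2d-d^2/\rho^2$ does no better. (The paper's parameter $k$ interpolates between two bounds on $|E(G[N(v)])|$, not on $x_v$.) What you actually need is $(*)$ itself, and it is tight for the star with $d_v=1$: there $t=1/(2(n-1))$, while $d/(\rho^2+d)=1/n$. So $(*)$ must come from structural information about $N(v)$, which splitting off $\|y_{\rm rest}\|^2$ does not by itself provide.

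\textbf{The missing ingredient.} The heart of the paper's proof is the edge count $b^TBb=2|E(G[N(v)])|$, used together with a different positive semidefinite weighting. If $\rho>\rho'$, then $y=x_v(\rho I-B)^{-1}b$ and $(\rho'^2I-B^2)(\rho I-B)^{-1}\succeq 0$, so $b^T(\rho'^2I-B^2)y\ge 0$. Expanding with $B^2y=\rho^2y-\rho x_v b-x_vBb$ gives $\rho^2-\rho'^2\le d+b^TBb/\rho$. Then $b^TBb\le d(d-1)$ gives $\rho^2-\rho'^2\le d+d(d-1)/\rho\le 2d-1$ whenever $\rho\ge d$, which would complete your argument. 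The paper instead shows $2|E(G[N(v)])|\le \rho(d-1)$ for all $\rho$, interpolating between $2|E(G[N(v)])|\le d(d-1)$ and the average-degree bound $\rho\ge\rho(G[N[v]])\ge 2(d+|E(G[N(v)])|)/(d+1)$.

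\textbf{Step 3 also needs repair.} Equality in $\|By\|^2\le\rho'^2\|y\|^2$ only yields $B^2y=\rho'^2y$, not that $y$ is a Perron vector of $B$. In the star equality case, $G-v=K_{1,n-2}$ is bipartite and $y$ is not an eigenvector of $B$. Cauchy--Schwarz equality ($y\propto b$) fails there as well. The equality analysis has to go through the estimate that actually handles $\rho>d$. The paper uses $B^2b=\theta^2 b$ and the resulting absence of paths of length two from $N(v)$ to $V(G)\setminus N[v]$.
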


In this paper, motivated by Liu and Ning's work \cite{ln26}, 
we establish the corresponding result for $\rho_\alpha(G)$ using matrix analysis, and obtain the main result as follows.

\begin{theorem}\label{thm:alpha spectral two}
Let $G$ be a simple graph of order $n$ and let $v \in V(G)$ be a vertex of degree $d_v$. For every $\alpha\in [0,1)$ and $k\in [0,d_v+1]$, 
\begin{equation}\label{eq:eq1}
\rho^2_{\alpha}(G)- (\rho_{\alpha}(G-v)+\alpha)^2 \leq (1-\alpha)^2 \left(2d_v-1 + \frac{\alpha d_v(n+d_v-1) + (k-2)(d_v-\rho_{\alpha}(G))}{\rho_{\alpha}(G)-\alpha d_v}\right).
\end{equation}
If $G$ is connected, the equality holds if and only if either $G\cong K_n$, or $G\cong K_{1,n-1}$ when $\alpha =0$,  $d_v=1$ and $k=2$.
\end{theorem}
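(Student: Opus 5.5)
The plan is to follow the matrix-analytic strategy of Liu and Ning behind Theorem~\ref{thm:sunliu}, with one extra ingredient to absorb the diagonal part of $A_\alpha$. Write $\rho=\rho_\alpha(G)$, and first reduce to the case $d_v\ge 1$; an isolated $v$ is checked directly. Let $x$ be a nonnegative unit Perron vector of $A_\alpha(G)$, and write $x=(x_v,y)$ with $y$ indexed by $V(G)\setminus\{v\}$. Let $\mathbf 1_N$ be the indicator vector of $N(v)$ and $D_N=\mathrm{diag}(\mathbf 1_N)$.

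The first step is the block decomposition
\[
A_\alpha(G)=\begin{pmatrix}\alpha d_v & (1-\alpha)\mathbf 1_N^{T}\\ (1-\alpha)\mathbf 1_N & M\end{pmatrix},\qquad M=A_\alpha(G-v)+\alpha D_N .
\]
Since $0\le D_N\le I$ and $M$ is symmetric, we get $\lambda_{\max}(M^2)\le(\rho_\alpha(G-v)+\alpha)^2=:\sigma^2$. This is exactly where the shift $+\alpha$ in \eqref{eq:eq1} comes from.

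The two eigen-equations are
\[
(\rho-\alpha d_v)x_v=(1-\alpha)S,\quad S=\sum_{u\in N(v)}x_u,\qquad My=\rho y-(1-\alpha)x_v\mathbf 1_N .
\]
Combining them with $\|My\|^2\le\sigma^2\|y\|^2$ gives
\[
\sigma^2\|y\|^2\ \ge\ \rho^2\|y\|^2-2\rho(\rho-\alpha d_v)x_v^2+(1-\alpha)^2d_vx_v^2 .
\]
This yields the key inequality
\[
\rho^2-\sigma^2\ \le\ \frac{x_v^2}{\|y\|^2}\Bigl(2\rho(\rho-\alpha d_v)-(1-\alpha)^2d_v\Bigr).
\]
For $\alpha=0$ this is the Liu--Ning identity, and the whole problem reduces to bounding the ratio $x_v^2/\|y\|^2$ from below in $\|y\|^2$.

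The second step is to bound $\|y\|^2$ from below by quantities expressible through $\rho$, $d_v$, $n$. The plan is to use the eigen-equation at each neighbour $u\in N(v)$, namely
\[
(\rho-\alpha d_u)x_u=(1-\alpha)\Bigl(x_v+\sum_{w\sim u,\,w\ne v}x_w\Bigr).
\]
Summing over $u\in N(v)$ relates $\sum_{u\in N(v)}x_u^2$ and $\sum_{w\ne v}x_w^2$ to $x_v^2$ and $S$. Cauchy--Schwarz over $N[v]$, with a free weight, then produces a one-parameter family of lower bounds. I will tune the weight so the parameter appears as $k\in[0,d_v+1]$. At the endpoints the weight degenerates to using only $x_v$, respectively all of $N[v]$. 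The factor $(k-2)(d_v-\rho)$ should appear from the mixed term $2\rho(\rho-\alpha d_v)$ after substituting $S=(\rho-\alpha d_v)x_v/(1-\alpha)$. The diagonal terms $\alpha d_u\le\alpha(n-1)$ are where the error term $\alpha d_v(n+d_v-1)$ must come from, bounding $\sum_{u\in N(v)}d_u\le d_v(n-1)$ plus the $d_v$ contributions from $v$ itself.

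This bookkeeping is the main obstacle: all terms must be collected into the form $(1-\alpha)^2(2d_v-1+\cdots)/(\rho-\alpha d_v)$, and one must check that $\rho-\alpha d_v>0$, which follows from $\rho\ge\alpha\Delta+(1-\alpha)\cdot(\text{something positive})$ when $d_v\ge1$. If the direct route does not close, I would instead bound $\|y\|^2\ge 1-x_v^2$ and estimate $x_v^2$ via $\rho^2x_v^2=(A_\alpha^2x)_v x_v$, as Liu and Ning do.

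Finally, for equality I will trace each inequality used. The first requires $y$ to be an eigenvector of $M$ for $\sigma$ with $D_N y=y$ on the support, forcing $N(v)=V\setminus\{v\}$ when $\alpha>0$ and $G$ is connected. The Cauchy--Schwarz step forces $x$ to be constant on the relevant sets, and the degree bound forces $d_u=n-1$. Together these give $G\cong K_n$. When $\alpha=0$, the $n$-term vanishes and the $k$-term is tight only if $k=2$ or $d_v=\rho$, which recovers the star case $K_{1,n-1}$ with $d_v=1$, in line with Theorem~\ref{thm:sunliu}.
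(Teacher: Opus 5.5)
Your first step is correct: from $My=\rho y-(1-\alpha)x_v\mathbf{1}_N$ and $\|My\|\le\sigma\|y\|$ you do get your key inequality. The gap is in the second step, which carries all the content and which you do not supply. The natural estimate suggested by your outline, $\|y\|^2\ge\sum_{u\in N(v)}x_u^2\ge S^2/d_v$, is not strong enough. Substituted into your key inequality, it gives
\[
\rho^2-\sigma^2\le\frac{(1-\alpha)^2d_v\bigl(2\rho(\rho-\alpha d_v)-(1-\alpha)^2d_v\bigr)}{(\rho-\alpha d_v)^2}.
\]
This is exactly what one obtains by squaring \eqref{eq:eq2}. For $\alpha=0$ it is $2d_v-d_v^2/\rho^2$ (Corollary~\ref{cor:adjacency spectral two}), which is strictly larger than $2d_v-1$ (the case $k=2$ of \eqref{eq:eq1}) whenever $d_v<\rho$.

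There is also no weight to tune. The right-hand side of \eqref{eq:eq1} is affine in $k$, so the whole family is equivalent to its endpoints $k=0$ and $k=d_v+1$. These endpoints encode two combinatorial facts:
\begin{itemize}
\item $2|E(G[N(v)])|\le\rho(d_v+1)-2d_v$, which follows from $\rho\ge\rho_\alpha(G[N[v]])\ge$ the average degree of $G[N[v]]$;
\item $2|E(G[N(v)])|\le d_v(d_v-1)$.
\end{itemize}
Nothing in your lower bound for $\|y\|^2$ brings in $|E(G[N(v)])|$. Likewise, the term $\alpha d_v(n+d_v-1)$ equals $\alpha d_v(n-d_v)+(2d_v-1)\alpha d_v$. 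It is not a bound on $\alpha\sum_{u\in N(v)}d_u$. Your equality discussion traces inequalities you have not specified, so it cannot be checked.

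The missing idea is to pair $\sigma^2I-M^2$ with $\mathbf{1}_N$ and $y$, instead of pairing $y$ with itself. When $\rho>\rho(M)$ you have $y=(1-\alpha)x_v(\rho I-M)^{-1}\mathbf{1}_N$. The matrix $(\sigma^2I-M^2)(\rho I-M)^{-1}$ is positive semidefinite, since its eigenvalues $(\sigma^2-t^2)/(\rho-t)$ are nonnegative for every eigenvalue $t$ of $M$ (because $|t|\le\rho(M)\le\sigma<\rho$). Hence $\mathbf{1}_N^{T}(\sigma^2I-M^2)y\ge 0$. Expand this using $M^2y=\rho^2y-(1-\alpha)x_v(\rho\mathbf{1}_N+M\mathbf{1}_N)$ and $S=(\rho-\alpha d_v)x_v/(1-\alpha)$. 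You get
\[
(\rho^2-\sigma^2)(\rho-\alpha d_v)\le(1-\alpha)^2\bigl(\rho d_v+\mathbf{1}_N^{T}M\mathbf{1}_N\bigr).
\]
This is the paper's \eqref{eq:eq4}, which the paper derives from the Schur complement and $f(t)=\frac{\lambda+t}{\lambda^2-\theta^2}-\frac{1}{\lambda-t}$.

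Now no eigenvector norm needs to be estimated:
\[
\mathbf{1}_N^{T}M\mathbf{1}_N=2|E(G[N(v)])|+\alpha\, e(N(v),V\setminus N(v))\le 2|E(G[N(v)])|+\alpha d_v(n-d_v).
\]
The two bounds on $2|E(G[N(v)])|$ listed above then give \eqref{eq:eq1}. For the equality case, the paper traces $g(k)=2|E(G[N(v)])|$ together with $B_\alpha^2\boldsymbol{b}=\theta^2\boldsymbol{b}$, splitting according to whether $\rho\le d_v$ or $\rho>d_v$.
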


By taking $\alpha=0$ in Theorem \ref{thm:alpha spectral two}, we have the following corollary.

\begin{corollary}\label{cor:alpha=0}
Let $G$ be a simple graph of order $n$ and let $v \in V(G)$ be a vertex of degree $d_v$. For every $k\in [0,d_v+1]$, 
$$
\rho^2(G)- \rho^2(G-v) \leq 2d_v-1 +(k-2)\left(\frac{d_v}{\rho(G)}-1\right),
$$
If $G$ is connected, the equality holds if and only if either $G\cong K_n$, or $G\cong K_{1,n-1}$ when  $d_v=1$ and $k=2$.
\end{corollary}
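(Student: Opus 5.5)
The corollary should come straight from Theorem \ref{thm:alpha spectral two} by setting $\alpha=0$. The plan is to substitute $\alpha=0$ into \eqref{eq:eq1} and simplify each term, then translate the equality characterization.

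\textbf{Substitution.} Put $\alpha=0$, which is admissible since $0\in[0,1)$, and recall that $\rho_0=\rho$. The left-hand side of \eqref{eq:eq1} becomes $\rho^2(G)-\rho^2(G-v)$. On the right-hand side:
\begin{itemize}
\item the factor $(1-\alpha)^2$ equals $1$;
\item the term $\alpha d_v(n+d_v-1)$ vanishes;
\item the denominator $\rho_\alpha(G)-\alpha d_v$ becomes $\rho(G)$.
\end{itemize}
The bound therefore reads
$$2d_v-1+\frac{(k-2)(d_v-\rho(G))}{\rho(G)}=2d_v-1+(k-2)\left(\frac{d_v}{\rho(G)}-1\right),$$
which is exactly the claimed inequality for every $k\in[0,d_v+1]$.

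\textbf{Edge case.} The only point needing care is that we divide by $\rho(G)$, so we need $\rho(G)>0$, i.e.\ $G$ has at least one edge. I would handle this the same way the theorem implicitly does. When $d_v\ge 1$ we have $\rho(G)\ge\sqrt{d_v}>0$, so the expression is well defined. When $G$ is edgeless the right-hand side is meaningless, so I would note that the statement is understood for $\rho(G)>0$, or equivalently for non-isolated $v$, as in Theorem \ref{thm:sunliu}.

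\textbf{Equality.} Specializing the equality clause of Theorem \ref{thm:alpha spectral two} to $\alpha=0$ gives, for connected $G$, equality if and only if $G\cong K_n$, or $G\cong K_{1,n-1}$ with $d_v=1$ and $k=2$. This is precisely the stated characterization, since the condition "when $\alpha=0$" is then automatically satisfied.

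No step is a genuine obstacle, because all the work lies in Theorem \ref{thm:alpha spectral two}. The only things to check are the algebraic simplification above and the positivity of the denominator.
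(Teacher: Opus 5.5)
Your proposal is correct and matches the paper, which obtains this corollary simply by setting $\alpha=0$ in Theorem \ref{thm:alpha spectral two}; your simplification of the right-hand side and your specialization of the equality clause are exactly what is needed. One small quibble on your edge-case remark: $\rho(G)>0$ is equivalent to $G$ having at least one edge, not to $v$ being non-isolated, and for an isolated $v$ in a graph with edges the bound is still well defined and true, since it reduces to $0\le 1-k$ with $k\in[0,1]$.
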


By setting $k=2$, we obtain Theorem \ref{thm:sunliu} from Corollary \ref{cor:alpha=0} immediately. By setting $\alpha=\frac{1}{2}$ in Theorem \ref{thm:alpha spectral two}, we have the following corollary.

\begin{corollary}\label{cor:alpha=1/2}
Let $G$ be a simple graph of order $n$ and let $v \in V(G)$ be a vertex of degree $d_v$. For every $k\in [0,d_v+1]$,
$$
q^2(G)- (q(G-v)+1)^2 \leq  2d_v-1+\frac{d_v(n+d_v-1)}{q(G)-d_v}+(k-2)\left(\frac{d_v}{q(G)-d_v}-1\right).
$$
If $G$ is connected, the equality holds if and only if $G\cong K_n$.
\end{corollary}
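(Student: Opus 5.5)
The plan is to obtain this corollary by specializing Theorem~\ref{thm:alpha spectral two} at $\alpha=\frac12$ and rescaling. No new spectral argument is needed: the work is checking that the algebra matches and that the equality characterization carries over.

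\textbf{Substitution.} Since $A_{1/2}(G)=\frac12 Q(G)$, we have $\rho_{1/2}(G)=\frac12 q(G)$ and $\rho_{1/2}(G-v)=\frac12 q(G-v)$. The left-hand side of \eqref{eq:eq1} therefore becomes
$$\frac{q^2(G)}{4}-\Bigl(\frac{q(G-v)}{2}+\frac12\Bigr)^2=\frac14\Bigl(q^2(G)-(q(G-v)+1)^2\Bigr).$$
On the right-hand side, $(1-\alpha)^2=\frac14$ and the denominator is $\rho_{1/2}(G)-\frac12 d_v=\frac12(q(G)-d_v)$. The numerator is
$$\tfrac12 d_v(n+d_v-1)+(k-2)\Bigl(d_v-\tfrac12 q(G)\Bigr)=\tfrac12\Bigl(d_v(n+d_v-1)+(k-2)(2d_v-q(G))\Bigr).$$
The two factors $\frac12$ cancel in the quotient. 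Multiplying both sides by $4$ gives
$$q^2(G)-(q(G-v)+1)^2\le 2d_v-1+\frac{d_v(n+d_v-1)}{q(G)-d_v}+(k-2)\frac{2d_v-q(G)}{q(G)-d_v}.$$
Finally, we use the identity $\frac{2d_v-q(G)}{q(G)-d_v}=\frac{d_v}{q(G)-d_v}-1$, which yields the displayed inequality.

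\textbf{Denominator.} The step that needs a small check is that $q(G)-d_v>0$, so that the division and rescaling are legitimate and the inequality direction is preserved. For $d_v\ge1$, $G$ contains the star $K_{1,d_v}$, whose signless Laplacian spectral radius is $d_v+1$. By monotonicity of the Perron root, $q(G)\ge d_v+1>d_v$. This is the same positivity of $\rho_\alpha(G)-\alpha d_v$ that Theorem~\ref{thm:alpha spectral two} implicitly uses. The degenerate case $d_v=0$ can be handled the same way whenever the theorem's hypotheses apply.

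\textbf{Equality.} Since every step is an equivalence (multiplication by the positive constant $4$ and algebraic identities), equality in the corollary holds exactly when equality holds in \eqref{eq:eq1} with $\alpha=\frac12$. By Theorem~\ref{thm:alpha spectral two}, for connected $G$ this means $G\cong K_n$ or $G\cong K_{1,n-1}$. The star alternative requires $\alpha=0$, so it is excluded here. Hence equality holds if and only if $G\cong K_n$.
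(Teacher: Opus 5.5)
Your proposal is correct and follows the paper's own route: the paper obtains this corollary simply by setting $\alpha=\frac12$ in Theorem~\ref{thm:alpha spectral two}. Your rescaling via $\rho_{1/2}=\frac12 q$ and the identity $\frac{2d_v-q(G)}{q(G)-d_v}=\frac{d_v}{q(G)-d_v}-1$ check out, as does your observation that the star case drops out because it requires $\alpha=0$.
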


It is therefore natural to investigate the difference between $\rho_{\alpha}(G)$ and $\rho_{\alpha}(G-v)$ for any $v\in V(G)$. Following the proof strategy of Theorem \ref{thm:alpha spectral two}, we present a simple and self-contained proof of the following result, which was first proved by Wang and She. 

\begin{theorem}{\rm (Wang and She \cite{ws22})}\label{thm:alpha spectral}
Let $G$ be a connected graph and let $v \in V(G)$ be a vertex of degree $d_v$. If $\alpha\in [0,1)$, then
\begin{equation}\label{eq:eq2}
\rho_{\alpha}(G)-\rho_{\alpha}(G-v)\leq \alpha +\frac{(1-\alpha)^2d_v}{\rho_{\alpha}(G)-\alpha d_v},
\end{equation}
with equality if and only if $d_v=n-1$ and $G-v$ is regular.
\end{theorem}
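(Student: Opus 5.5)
Let $\rho=\rho_\alpha(G)$ and let $x$ be a nonnegative unit Perron vector of $A_\alpha(G)$; since $G$ is connected, $x>0$. Write $x=(x_v,y)$, where $y$ collects the entries on $V(G)\setminus\{v\}$. Split
$$A_\alpha(G)=\begin{pmatrix}\alpha d_v & (1-\alpha)b^T\\ (1-\alpha)b & A_\alpha(G-v)+\alpha D_b\end{pmatrix},$$
where $b$ is the $0/1$ indicator vector of $N(v)$ in $V(G)\setminus\{v\}$ and $D_b=\mathrm{diag}(b)$. The two eigen-equations are
$$(\rho-\alpha d_v)x_v=(1-\alpha)b^Ty, \qquad \rho y=(1-\alpha)x_v b+(A_\alpha(G-v)+\alpha D_b)y.$$
Since $\alpha<1$ and $G$ is connected, $\rho>\alpha d_v$ (for instance because $\rho\ge \alpha\Delta+(1-\alpha)\cdot(\text{something positive})$, or directly from the first equation with $x_v,b^Ty>0$). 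Hence $x_v=(1-\alpha)b^Ty/(\rho-\alpha d_v)$.

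Take the inner product of the second equation with $y$ and use Rayleigh's principle, $y^TA_\alpha(G-v)y\le \rho_\alpha(G-v)\,\|y\|^2$, together with $y^T D_b y\le \|y\|^2$:
$$\rho\|y\|^2\le (1-\alpha)x_v\,b^Ty+(\rho_\alpha(G-v)+\alpha)\|y\|^2 .$$
Substituting $(1-\alpha)b^Ty=(\rho-\alpha d_v)x_v$ and $\|y\|^2=1-x_v^2$ gives
$$\rho-\rho_\alpha(G-v)-\alpha\le \frac{(\rho-\alpha d_v)x_v^2}{1-x_v^2}.$$
So it remains to bound $x_v^2$. By Cauchy--Schwarz, $(b^Ty)^2\le d_v\sum_{u\in N(v)}x_u^2\le d_v(1-x_v^2)$, hence
$$(\rho-\alpha d_v)^2x_v^2\le (1-\alpha)^2d_v(1-x_v^2), \qquad\text{i.e.}\qquad \frac{x_v^2}{1-x_v^2}\le \frac{(1-\alpha)^2d_v}{(\rho-\alpha d_v)^2}.$$
Plugging this in yields exactly \eqref{eq:eq2}. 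The argument is short; the main care is in justifying the positivity of the denominators ($x_v<1$ and $\rho>\alpha d_v$, both from connectivity).

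The main obstacle is the equality characterization. Equality forces three things at once. First, the Cauchy--Schwarz steps must be tight, so $y$ is supported on $N(v)$ and is constant there; connectivity then gives $N(v)=V\setminus\{v\}$, i.e.\ $d_v=n-1$. Second, $y^TD_by=\|y\|^2$, which is automatic once $d_v=n-1$. Third, $y$ must be an eigenvector of $A_\alpha(G-v)$ for $\rho_\alpha(G-v)$; since $y$ is constant, this means $G-v$ is regular.

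For the converse, take $G=K_1\vee H$ with $H$ an $r$-regular graph. Then $\rho_\alpha(G-v)=r$, and the Perron vector is constant on $H$, so every inequality above is tight and equality holds in \eqref{eq:eq2}. I would verify this tightness directly by plugging the eigen-equations into the chain of inequalities.
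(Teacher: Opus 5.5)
Your proof is correct, but it takes a genuinely different route from the paper's.

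\textbf{The paper's route.} It never uses an eigenvector. Write $\lambda=\rho_\alpha(G)$ (your $\rho$), $\mu=\rho_\alpha(G-v)$, $B_\alpha=A_\alpha(G-v)+\alpha D_b$ and $\theta=\rho(B_\alpha)$. The paper takes the Schur complement of $\lambda I-B_\alpha$ to get $\lambda-\alpha d_v=(1-\alpha)^2\boldsymbol{b}^{\mathrm{T}}R\boldsymbol{b}$, where $R=(\lambda I-B_\alpha)^{-1}$. It then uses that $I-(\lambda-\theta)R$ is positive semidefinite to obtain $\lambda-\theta\le \boldsymbol{b}^{\mathrm{T}}\boldsymbol{b}/\boldsymbol{b}^{\mathrm{T}}R\boldsymbol{b}=(1-\alpha)^2d_v/(\lambda-\alpha d_v)$, and finishes with $\theta\le\mu+\alpha$.

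\textbf{Your route and how it relates.} You work with the Perron vector, using Rayleigh's principle on the $G-v$ block and Cauchy--Schwarz on $\boldsymbol{b}^{\mathrm{T}}y$. Since $y=(1-\alpha)x_vR\boldsymbol{b}$, using $\theta$ instead of $\mu+\alpha$ in your Rayleigh step turns your chain into
$\lambda-\theta\le \boldsymbol{b}^{\mathrm{T}}R\boldsymbol{b}/\boldsymbol{b}^{\mathrm{T}}R^2\boldsymbol{b}\le \boldsymbol{b}^{\mathrm{T}}\boldsymbol{b}/\boldsymbol{b}^{\mathrm{T}}R\boldsymbol{b}$.
The second step is exactly your Cauchy--Schwarz, $(\boldsymbol{b}^{\mathrm{T}}R\boldsymbol{b})^2\le \|\boldsymbol{b}\|^2\|R\boldsymbol{b}\|^2$. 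So your intermediate bound $(\rho-\alpha d_v)x_v^2/(1-x_v^2)$ is never weaker than the paper's.

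\textbf{What your approach buys.} It is more elementary, with no determinants or functional calculus. It needs no preliminary case split to ensure $\lambda>\theta$. It also makes the equality case quicker: tightness of $\sum_{u\in N(v)}x_u^2\le 1-x_v^2$ together with $x>0$ gives $d_v=n-1$ at once. The paper instead first derives $B_\alpha\boldsymbol{b}=\theta\boldsymbol{b}$ and then argues component by component that each block of $\boldsymbol{b}$ is a Perron vector, hence all-ones.

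\textbf{What the paper's approach buys.} It is the same template the paper uses for Theorem~\ref{thm:alpha spectral two}. There, swapping in the test function $\frac{\lambda+t}{\lambda^2-\theta^2}-\frac{1}{\lambda-t}$ yields the squared bound involving $\boldsymbol{b}^{\mathrm{T}}B_\alpha\boldsymbol{b}$. The matrix-function method adapts to any such test function without eigenvector bookkeeping.

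A minor point: your first justification of $\rho>\alpha d_v$ is vague. The second one, from $(\rho-\alpha d_v)x_v=(1-\alpha)\boldsymbol{b}^{\mathrm{T}}y>0$, is sufficient on its own.
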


By setting $\alpha=0$ and $\alpha=\frac{1}{2}$, we deduce the following corollaries from Theorem \ref{thm:alpha spectral}.

\begin{corollary}{\rm (Wang and She \cite{ws22})}\label{cor:adjacency spectral}
Let $G$ be a connected graph and let $v \in V(G)$ be a vertex of degree $d_v$. Then
$$
\rho(G)-\rho(G-v)\leq \frac{d_v}{\rho(G)},
$$
with equality if and only if $d_v=n-1$ and $G-v$ is regular.
\end{corollary}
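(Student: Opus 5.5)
The statement immediately above is Corollary~\ref{cor:adjacency spectral}, and I would obtain it as the special case $\alpha=0$ of Theorem~\ref{thm:alpha spectral}, which may be assumed. At $\alpha=0$ the matrix $A_0(G)=A(G)$, so $\rho_0(G)=\rho(G)$ and $\rho_0(G-v)=\rho(G-v)$.

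The plan has two steps.

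\textbf{Step 1: the inequality.} Substitute $\alpha=0$ into (\ref{eq:eq2}). The additive term $\alpha$ vanishes. The factor $(1-\alpha)^2$ becomes $1$. The denominator $\rho_\alpha(G)-\alpha d_v$ becomes $\rho(G)$. The right-hand side is therefore exactly $d_v/\rho(G)$.

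This requires $\rho(G)>0$, so the quotient must be well defined. Since $G$ is connected, either $G$ has an edge, in which case $\rho(G)\ge 1>0$, or $G=K_1$, which has no vertex of positive degree. I would note that the theorem itself implicitly assumes $\rho_\alpha(G)-\alpha d_v>0$. So I would treat $n\ge 2$ and regard the degenerate case $K_1$ as excluded.

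\textbf{Step 2: the equality case.} The equality characterization in Theorem~\ref{thm:alpha spectral} does not depend on $\alpha$: it is $d_v=n-1$ and $G-v$ regular. Hence it transfers verbatim to the case $\alpha=0$.

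As a sanity check, suppose $G-v$ is $r$-regular on $n-1$ vertices and $v$ is joined to all of them. Then $\rho(G-v)=r$. Using the equitable partition $\{v\}$, $V\setminus\{v\}$, the value $\rho(G)$ is the largest root of $\rho(\rho-r)=n-1$. This gives $\rho(G)-r=(n-1)/\rho(G)=d_v/\rho(G)$, confirming that equality is attained.

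There is no real obstacle. The only care needed is to confirm that the hypothesis $\alpha\in[0,1)$ includes $0$ and that the denominator is positive, both of which are immediate.
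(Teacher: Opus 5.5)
Your proposal is correct and follows the paper's own route: the paper obtains this corollary simply by setting $\alpha=0$ in Theorem~\ref{thm:alpha spectral}, and the equality characterization carries over unchanged. Your additional remarks (that $\rho(G)>0$ for connected $G$ with $n\ge 2$, and the equitable-partition check that equality is attained) are correct.
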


By Corollary \ref{cor:adjacency spectral} and $\rho(G)\geq \rho(K_{1,d_v})=\sqrt{d_v}$, it follows that 
$\left(\rho(G)-\frac{d_v}{\rho(G)}\right)^2\leq \rho^2(G-v)$ 
if and only if $\rho(G)-\rho(G-v)\leq \frac{d_v}{\rho(G)}$, 
which yields the following corollary.

\begin{corollary}\label{cor:adjacency spectral two}
Let $G$ be a connected graph and let $v \in V(G)$ be a vertex of degree $d_v$. Then
$$
\rho^2(G)-\rho^2(G-v)\leq 2d_v-\frac{d^2_v}{\rho^2(G)},
$$
with equality if and only if $d_v=n-1$ and $G-v$ is regular.
\end{corollary}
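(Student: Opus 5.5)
The plan is to derive this directly from Corollary~\ref{cor:adjacency spectral} by squaring, so the only work is checking that the squaring is legitimate and that it preserves the equality case.

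First, I will check that the quantity $\rho(G)-\frac{d_v}{\rho(G)}$ is nonnegative. The star $K_{1,d_v}$ formed by $v$ and its neighbours is a subgraph of $G$. By Perron--Frobenius monotonicity, $\rho(G)\ge \rho(K_{1,d_v})=\sqrt{d_v}$, so $\rho^2(G)\ge d_v$. If $d_v\ge 1$, then $\rho(G)>0$ and this gives $\rho(G)-\frac{d_v}{\rho(G)}=\frac{\rho^2(G)-d_v}{\rho(G)}\ge 0$. The case $d_v=0$ cannot occur for a connected graph unless $n=1$, which is a degenerate case I would set aside. Also $\rho(G-v)\ge 0$. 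So both sides of
$$\rho(G)-\frac{d_v}{\rho(G)}\le \rho(G-v),$$
which is just a rearrangement of Corollary~\ref{cor:adjacency spectral}, are nonnegative.

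Next, since $t\mapsto t^2$ is strictly increasing on $[0,\infty)$, I square both sides to get
$$\rho^2(G)-2d_v+\frac{d_v^2}{\rho^2(G)}\le \rho^2(G-v).$$
Rearranging this gives exactly $\rho^2(G)-\rho^2(G-v)\le 2d_v-\frac{d_v^2}{\rho^2(G)}$.

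For the equality case, strict monotonicity of squaring on nonnegative reals means the squared inequality is tight if and only if the unsquared one is. By Corollary~\ref{cor:adjacency spectral}, the unsquared inequality is tight if and only if $d_v=n-1$ and $G-v$ is regular.

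The only delicate point is the nonnegativity check, which rests on the star-subgraph bound $\rho(G)\ge\sqrt{d_v}$. Everything else is a routine algebraic rearrangement.
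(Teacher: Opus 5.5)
Your proposal is correct and is essentially the paper's argument: the paper also rewrites Corollary~\ref{cor:adjacency spectral} as $\rho(G)-\frac{d_v}{\rho(G)}\leq \rho(G-v)$ and squares it. It likewise uses $\rho(G)\geq\rho(K_{1,d_v})=\sqrt{d_v}$ to guarantee nonnegativity, so the two inequalities are equivalent and the equality characterization transfers directly.
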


\begin{corollary}{\rm (Wang and She \cite{ws22})}\label{cor:Q spectral}
Let $G$ be a connected graph and let $v \in V(G)$ be a vertex of degree $d_v$. Then
$$
q(G)-q(G-v)\leq \frac{q(G)}{q(G)-d_v},
$$
with equality if and only if $d_v=n-1$ and $G-v$ is regular.
\end{corollary}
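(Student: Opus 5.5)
The plan is to specialize Theorem \ref{thm:alpha spectral} to $\alpha=\frac{1}{2}$ and rescale, using the identity $2\rho_{\frac{1}{2}}(H)=q(H)$, valid for every graph $H$. In particular it applies to both $G$ and $G-v$, so that $\rho_{\frac12}(G)=q(G)/2$ and $\rho_{\frac12}(G-v)=q(G-v)/2$.

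\textbf{Step 1: substitute.} Since $\frac{1}{2}\in[0,1)$ and $G$ is connected, inequality (\ref{eq:eq2}) gives
$$
\frac{q(G)}{2}-\frac{q(G-v)}{2}\leq \frac12+\frac{\frac14 d_v}{\frac{q(G)}{2}-\frac{d_v}{2}}
=\frac12+\frac{d_v}{2\,(q(G)-d_v)}.
$$

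\textbf{Step 2: rescale.} Multiplying by $2$ yields
$$
q(G)-q(G-v)\leq 1+\frac{d_v}{q(G)-d_v}=\frac{q(G)}{q(G)-d_v}.
$$

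\textbf{Step 3: check the denominator.} The only point needing care is that $q(G)-d_v>0$, so that the division and the final simplification are legitimate. If $n\ge 2$, then $d_v\ge1$ because $G$ is connected. Then $G$ contains $K_{1,d_v}$ as a subgraph, and monotonicity of the Perron root gives $q(G)\ge q(K_{1,d_v})=d_v+1>d_v$. Alternatively, one can use the positivity $\rho_\alpha(G)-\alpha d_v>0$ already implicit in the statement of Theorem \ref{thm:alpha spectral}. The trivial case $n=1$ can be set aside, since then $G-v$ is empty.

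\textbf{Step 4: equality.} Every transformation above is an equivalence: it consists of substitutions and multiplication by the positive constant $2$. Hence equality in the corollary holds exactly when equality holds in (\ref{eq:eq2}) at $\alpha=\frac12$. By Theorem \ref{thm:alpha spectral}, this happens if and only if $d_v=n-1$ and $G-v$ is regular.

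There is no real obstacle here. The only steps needing attention are the arithmetic simplification and confirming positivity of the denominator.
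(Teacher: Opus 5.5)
Your proposal is correct and follows the paper's own route: it sets $\alpha=\frac{1}{2}$ in Theorem \ref{thm:alpha spectral}, rescales using $2\rho_{\frac12}=q$, and inherits the equality case directly. Your explicit check that $q(G)-d_v>0$ via $q(G)\ge q(K_{1,d_v})=d_v+1$ is a small addition that the paper leaves implicit.
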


If $\rho(G)\geq kd(v)$, then $\rho(G)-\rho(G-v)\leq \frac{1}{k}$ by Corollary \ref{cor:adjacency spectral}. If $q(G)\geq k d(v)$, then $q(G)-q(G-v)\leq \frac{k}{k-1}$ by Corollary \ref{cor:Q spectral}. In particular, $\rho(G)-\rho(G-v)\leq 1$ if $d(v)\leq \rho(G)$, and $q(G)-q(G-v)\leq 2$ if $d(v)\leq \frac{1}{2}q(G)$.

If $d_v \geq \rho(G)$, by substituting $k=0$ into the upper bound in Corollary \ref{cor:alpha=0}, then we have
$$
\rho^2(G)- \rho^2(G-v) \leq 2d_v+1 - \frac{2d_v}{\rho(G)}.
$$
Note that 
$$
2d_v-\frac{d_v^2}{\rho^2(G)} \leq 2d_v+1 - \frac{2d_v}{\rho(G)}\leq 2d_v-1.  
$$
In this sense, the bound in Corollary \ref{cor:adjacency spectral two} is sharper than that in Corollary \ref{cor:alpha=0}, while the bound in Corollary \ref{cor:alpha=0} is sharper than that in Theorem \ref{thm:sunliu}.

If $d_v \leq \rho(G)$, by substituting $k=d_v+1$ into the upper bound in Corollary \ref{cor:alpha=0}, then 
$$
\rho^2(G)- \rho^2(G-v) \leq d_v + \frac{d_v(d_v-1)}{\rho(G)}.
$$
Note that
$$
d_v + \frac{d_v(d_v-1)}{\rho(G)} \leq 2d_v-1 \leq 2d_v-\frac{d_v^2}{\rho^2(G)}.
$$
In this sense, the bound in Corollary \ref{cor:alpha=0} is sharper than that in Theorem \ref{thm:sunliu}, while the bound in Theorem \ref{thm:sunliu} is sharper than that in Corollary \ref{cor:adjacency spectral two}.

In this paper, we also establish the following sharp upper bound of $\rho_\alpha(G)$ in terms of the maximum degree, minimum degree, order and size of $G$. 

\begin{theorem}\label{thm:A_alpha bound}
Let $G$ be a graph of order $n$, with $m$ edges, and with maximum degree $\Delta$ and minimum degree $\delta$. If $\alpha\in [0,1)$, then
\begin{equation}\label{eq:eq3}
\rho_\alpha(G) \leq \min\left\{ \Delta, \frac{1}{2}(\alpha(\Delta+1)+\delta-1) 
+ \frac{1}{2}\sqrt{(\alpha(\Delta+1)-\delta-1)^2+4(1-\alpha)(2m-n\delta)} \right\}.
\end{equation}
Equality holds if and only if $G$ is regular or $G$ has a component of order $\Delta + 1$ in which each vertex
is of degree $\delta$ or $\Delta$, and all other components are $\delta$-regular.
\end{theorem}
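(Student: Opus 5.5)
We prove Theorem \ref{thm:A_alpha bound} with a quadratic matrix inequality for $A_\alpha=A_\alpha(G)$, in the spirit of the Hong--Shu--Fang and Nikiforov arguments. The bound $\rho_\alpha(G)\le\Delta$ is standard: it is the largest row sum of the nonnegative matrix $A_\alpha$. The real work is the second term.

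First reduce to the connected case. $\rho_\alpha(G)$ is the maximum of $\rho_\alpha$ over the components of $G$. For a component $H$ with $n_H$ vertices and $m_H$ edges, and with $\delta(H)\ge\delta$, $\Delta(H)\le\Delta$, we have $2m_H-n_H\delta\le 2m-n\delta$ because every other component contributes $2m'-n'\delta\ge0$. The right-hand side of \eqref{eq:eq3} is monotone in these parameters, which we will check in the final step. So we may assume $G$ is connected and take the positive Perron vector $x$ of $A_\alpha$, with $\rho=\rho_\alpha(G)$.

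The key identity computes $A_\alpha^2x=\rho^2x$ at a vertex $u$:
\[
\rho^2x_u=\alpha d_u\rho x_u+(1-\alpha)\sum_{w\sim u}\rho x_w .
\]
Expand $\rho x_w=\alpha d_w x_w+(1-\alpha)\sum_{z\sim w}x_z$. Then use $d_w\le\Delta$ and $d_u\ge\delta$ as follows.

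\textbf{Step 1.} Write $\alpha d_u\rho x_u=\alpha d_u\bigl(\rho x_u-(1-\alpha)\sum_{w\sim u}x_w\bigr)/\alpha$, which is $\alpha d_u\cdot d_ux_u$ plus adjacency terms. A cleaner route is to work with $B=A_\alpha-\alpha\Delta I$ or $A_\alpha-(\delta-1)I$ type shifts. Concretely, consider
\[
(A_\alpha-\alpha(\Delta+1)I)(A_\alpha-(\delta-1)I)\le cJ\text{-type bound}.
\]
Compute the $u$-row sum against $x$:
\[
\sum_{w\sim u}\rho x_w=\sum_{w\sim u}\Bigl(\alpha d_wx_w+(1-\alpha)\sum_{z\sim w}x_z\Bigr).
\]
Use $\alpha d_w\le\alpha\Delta$, and use that $\sum_{w\sim u}\sum_{z\sim w}x_z$ counts $x_u$ exactly $d_u$ times and every other vertex at most $d_z$ times. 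This is the Hong--Shu--Fang trick: bound it by $d_ux_u+\sum_{z\ne u}d_zx_z$-style terms. Normalize $\max x=1$ or $\sum x=1$; here use $\sum_z x_z$.

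\textbf{Step 2.} Following Nikiforov's argument for $q(G)$, sum the row inequalities over $u$, weighted appropriately, and use $\sum_u d_u=2m$. Write $\sum_{z}(d_z-\delta)x_z\le(2m-n\delta)\max x$ and evaluate at the vertex $u$ where $x_u$ is maximal. This should yield
\[
\rho^2-(\alpha(\Delta+1)+\delta-1)\rho+(\alpha(\Delta+1))(\delta-1)+\ldots\le(1-\alpha)(2m-n\delta),
\]
that is, $(\rho-\alpha(\Delta+1))(\rho-\delta+1)\le(1-\alpha)(2m-n\delta)$ after the cross terms cancel. Solving this quadratic in $\rho$ gives exactly \eqref{eq:eq3}, since the discriminant is $(\alpha(\Delta+1)-\delta-1)^2+4(1-\alpha)(2m-n\delta)$. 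I expect this to be the main obstacle. At $\alpha=0$ the inequality is $\rho(\rho-\delta+1)\le 2m-n\delta$ (Hong--Shu--Fang). At $\alpha=1/2$ it is Nikiforov's bound for $q$. The task is to arrange the $\alpha d_u x_u$ diagonal terms so that the coefficients $\alpha(\Delta+1)$ and $\delta-1$ appear; the "$+1$" comes from the $x_u$ term counted in the double neighbourhood sum. I would first try bounding the double neighbourhood sum by
\[
\sum_{w\sim u}\sum_{z\sim w}x_z\le(2m-n\delta)x_{\max}+(\delta-1)\sum_{w\sim u}x_w+\ldots ,
\]
and adjust from there.

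\textbf{Step 3: equality.} Equality forces $d_w=\Delta$ for all $w$ adjacent to the maximizing vertex, and forces all the $x_z$ to be equal wherever $d_z>\delta$. Tracing these conditions gives the stated structure: $G$ is regular, or the component containing the maximum has each vertex adjacent to all others (order $\Delta+1$) with degrees in $\{\delta,\Delta\}$, and the remaining components are $\delta$-regular, so that they contribute zero to $2m-n\delta$. Conversely, direct computation confirms equality for these graphs.
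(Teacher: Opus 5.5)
\textbf{There is a genuine gap: the central inequality is never derived, and the one you aim for is false.} Your framework is the right one: a quadratic in $A_\alpha$ with linear coefficient $c=\alpha(\Delta+1)+\delta-1$, controlled by row sums and the Perron vector. But your target $(\rho-\alpha(\Delta+1))(\rho-\delta+1)\le(1-\alpha)(2m-n\delta)$ fails in two ways:
\begin{enumerate}[$(i)$]
\item Its discriminant is $(\alpha(\Delta+1)-\delta+1)^2+4(1-\alpha)(2m-n\delta)$, not the one in \eqref{eq:eq3}, so solving it would not give the statement.
\item For a $k$-regular graph at $\alpha=0$ it reads $k\le 0$. Relatedly, you misquote Hong--Shu--Fang: it is $\rho(\rho-\delta+1)\le 2m-(n-1)\delta$, not $\le 2m-n\delta$.
\end{enumerate}
The inequality that actually yields \eqref{eq:eq3} is
\[
\rho_\alpha^2-c\,\rho_\alpha\le(1-\alpha)\bigl(2m-(n-1)\delta\bigr)-\alpha\Delta\delta,
\]
whose constant term differs from yours by $\delta-\alpha(\Delta+1)$.

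\textbf{The missing idea is the step you defer as ``the main obstacle'': absorbing the diagonal $\alpha D$ contributions.} The paper sets $M_\alpha=A_\alpha^2-cA_\alpha$ and uses three facts:
\begin{enumerate}[$(i)$]
\item $r_v(A_\alpha)=d_v$;
\item $r_v(A_\alpha^2)=\alpha d_v^2+(1-\alpha)\sum_{u\sim v}d_u$;
\item $\sum_{u\sim v}d_u\le 2m-d_v-(n-d_v-1)\delta$.
\end{enumerate}
Together these give $r_v(M_\alpha)\le(1-\alpha)(2m-(n-1)\delta)+\alpha\bigl(d_v^2-(\Delta+\delta)d_v\bigr)$. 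The quadratic term is then removed by $(d_v-\Delta)(d_v-\delta)\le 0$, i.e.\ $d_v^2-(\Delta+\delta)d_v\le-\Delta\delta$. This is why the linear coefficient contains $\alpha(\Delta+1)$: combined with the $-\alpha(\delta-1)d_v$ left over from the degree-sum bound, it produces exactly $-\alpha(\Delta+\delta)d_v$.

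\textbf{Evaluating at a vertex where $x$ is maximal does not work.} $M_\alpha$ can have negative off-diagonal entries; at $\alpha=0$ the entry for an edge $uz$ is $|N(u)\cap N(z)|-\delta+1$. So a row bound at one vertex does not control $\sum_z(M_\alpha)_{uz}x_z$. Use symmetry instead: $(\rho_\alpha^2-c\rho_\alpha)\mathbf{1}^{\mathrm T}x=\mathbf{1}^{\mathrm T}M_\alpha x=\sum_v r_v(M_\alpha)x_v$ with $x\ge 0$ the Perron vector. This works for disconnected graphs too, so your reduction to the connected case, and its unproved monotonicity claim, are unnecessary for the inequality.

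\textbf{Your equality sketch is not tied to any chain of inequalities, and it misdescribes the extremal component.} If each vertex were adjacent to all others, the component would be complete. The correct route is as follows. For connected $G$ one has $x>0$, so equality forces every row sum of $M_\alpha$ to attain the bound, i.e.\ $\sum_{u\notin N[v]}d_u=(n-d_v-1)\delta$ for all $v$. Hence any two non-adjacent vertices both have degree $\delta$, so every vertex of degree larger than $\delta$ is adjacent to all other vertices. The disconnected case then does require monotonicity of the bound in $\delta$ and $\Delta$, as in Lemma~\ref{lem:f(a,b)}, which you assert but never prove.
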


A straightforward calculation yields that the right-hand side of \eqref{eq:eq3} equals $\Delta$ provided 
$2m \geq \Delta^2 + \Delta + (n - \Delta - 1) \delta$ for every $\alpha\in [0,1)$.

As an immediate consequence of Theorem \ref{thm:A_alpha bound}, we obtain two known results. First, we retrieve the upper bound of $\rho(G)$, which was independently presented by Hong, Shu, Fang \cite{hsf01} and Nikiforov \cite{niki02}; its equality condition was verified by Zhou and Cho \cite{zc05}. Second, we also arrive at the upper bound of 
$q(G)$ given by Nikiforov \cite{niki14}.

\begin{corollary}{\rm (Hong, Shu and Fang \cite{hsf01})}\label{coro:hsf01}
If $G$ is a graph of order $n$, with $m$ edges, and with minimum degree $\delta$, then 
\[
\rho(G) \leq \frac{1}{2}(\delta-1) + \frac{1}{2}\sqrt{(\delta+1)^2+4(2m - n\delta)}.
\]
Equality holds if and only if $G$ is regular or $G$ has a component of order $\Delta+1$ in which every vertex is of degree $\delta$ or $\Delta$, and all other components are $\delta$-regular.
\end{corollary}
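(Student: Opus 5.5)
Corollary \ref{coro:hsf01} is the case $\alpha=0$ of Theorem \ref{thm:A_alpha bound}, so the plan is to specialize that theorem and simplify. With $\alpha=0$ we have $A_0(G)=A(G)$, so $\rho_0(G)=\rho(G)$. The first term of the minimum in \eqref{eq:eq3} is $\Delta$. The second term becomes
$$
\frac{1}{2}(\delta-1)+\frac{1}{2}\sqrt{(-\delta-1)^2+4(2m-n\delta)}=\frac{1}{2}(\delta-1)+\frac{1}{2}\sqrt{(\delta+1)^2+4(2m-n\delta)} .
$$
Hence Theorem \ref{thm:A_alpha bound} gives $\rho(G)\le\min\{\Delta,\cdot\}\le \frac12(\delta-1)+\frac12\sqrt{(\delta+1)^2+4(2m-n\delta)}$, which is exactly the claimed inequality.

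For the equality case, the corollary's bound is the second entry $B$ of the minimum, not the minimum itself. So I must check that whenever $\rho(G)=B$, this $B$ actually coincides with $\min\{\Delta,B\}$, so that the equality characterization of Theorem \ref{thm:A_alpha bound} applies. The key step is to compare $B$ with $\Delta$. Since $B$ is the larger root of $x^2-(\delta-1)x-(2m-n\delta+\delta)=0$, we have $B\ge\Delta$ if and only if
$$
\Delta^2-(\delta-1)\Delta-(2m-n\delta+\delta)\le 0, \quad\text{i.e.}\quad 2m\ge \Delta^2+\Delta+(n-\Delta-1)\delta ,
$$
the same threshold the paper notes after Theorem \ref{thm:A_alpha bound}.

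\begin{itemize}
\item If $B<\Delta$, then $\min\{\Delta,B\}=B$. Equality $\rho(G)=B$ is therefore equality in \eqref{eq:eq3}, and Theorem \ref{thm:A_alpha bound} gives the stated structural characterization.
\item If $B\ge\Delta$, then $\rho(G)\le\Delta\le B$, so equality forces $\rho(G)=\Delta=B$. Then $\rho(G)=\min\{\Delta,B\}$ again, and Theorem \ref{thm:A_alpha bound} applies.
\end{itemize}

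Conversely, each listed graph attains equality in \eqref{eq:eq3}. I then need to confirm that for it the minimum equals $B$ (or $B=\Delta$). For a $\delta$-regular graph this holds because $2m=n\delta$ gives $B=\delta=\Delta$. For the other family, a direct computation of $2m$ should show $B=\Delta$ or $B\le\Delta$, so that $\rho(G)=B$.

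The main obstacle is this bookkeeping around the minimum, i.e., making sure the equality conditions transfer when the bound is $B$ rather than $\min\{\Delta,B\}$. The inequality itself is immediate from Theorem \ref{thm:A_alpha bound}. Alternatively, one can cite the equality characterization of Zhou and Cho \cite{zc05} directly.
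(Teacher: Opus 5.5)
Your proposal is correct and is essentially the paper's argument: the paper states that this corollary is an immediate consequence of Theorem~\ref{thm:A_alpha bound} at $\alpha=0$. Your extra care about the bound being $B$ rather than $\min\{\Delta,B\}$ is sound, and it is a refinement the paper leaves implicit. The computation you deferred is easy: for the second family, $2m-n\delta=k(\Delta-\delta)$, where $k\le\Delta+1$ is the number of degree-$\Delta$ vertices, so $B\le\Delta$.
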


\begin{corollary}{\rm (Nikiforov \cite{niki14})}\label{coro:niki14}
If $G$ is a graph of order $n$, with $m$ edges, with maximum degree $\Delta$ and minimum degree $\delta$, then
\[
q(G) \leq \min\left\{ 2\Delta, \frac{1}{2}\left( \Delta + 2\delta - 1 + \sqrt{(\Delta - 2\delta - 1)^2 + 8(2m - n \delta)} \right) \right\}.
\]
Equality holds if and only if $G$ is regular or $G$ has a component of order $\Delta + 1$ in which every vertex is of degree $\delta$ or $\Delta$, and all other components are $\delta$-regular.
\end{corollary}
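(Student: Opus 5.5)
We obtain this directly from Theorem \ref{thm:A_alpha bound} by specializing to $\alpha=\frac{1}{2}$ and using the identity $q(G)=2\rho_{\frac{1}{2}}(G)$ recalled in the introduction. Since $\frac12\in[0,1)$, the theorem applies to every graph $G$ of order $n$ with $m$ edges, maximum degree $\Delta$ and minimum degree $\delta$. No spectral argument beyond the theorem is needed; the proof is an algebraic rewriting.

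First, substituting $\alpha=\frac12$ into \eqref{eq:eq3} gives
$$
\rho_{\frac12}(G)\le \min\left\{\Delta,\ \frac12\left(\frac{\Delta+1}{2}+\delta-1\right)+\frac12\sqrt{\left(\frac{\Delta+1}{2}-\delta-1\right)^2+2(2m-n\delta)}\right\}.
$$
Multiplying by $2$, and using that scaling by a positive constant commutes with taking the minimum, the first entry becomes $2\Delta$. The second entry becomes $\frac{\Delta+2\delta-1}{2}+\sqrt{\cdot}$. Next, we note that $\frac{\Delta+1}{2}-\delta-1=\frac{\Delta-2\delta-1}{2}$. Hence the radicand equals $\frac14\left((\Delta-2\delta-1)^2+8(2m-n\delta)\right)$, and its square root is $\frac12\sqrt{(\Delta-2\delta-1)^2+8(2m-n\delta)}$. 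Combining the two terms yields exactly the stated expression $\frac12\left(\Delta+2\delta-1+\sqrt{(\Delta-2\delta-1)^2+8(2m-n\delta)}\right)$.

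For the equality case, equality in the corollary holds exactly when equality holds in \eqref{eq:eq3} at $\alpha=\frac12$, because every step above is an identity. The characterization in Theorem \ref{thm:A_alpha bound} does not depend on $\alpha$. It therefore transfers verbatim: $G$ is regular, or $G$ has a component of order $\Delta+1$ whose vertices all have degree $\delta$ or $\Delta$, and all other components are $\delta$-regular.

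The only point needing care is the bookkeeping of the factors $\frac12$ and $4$ inside the square root. There is no genuine obstacle, since all the analytic work is contained in Theorem \ref{thm:A_alpha bound}.
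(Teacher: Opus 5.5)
Your proposal is correct and follows the paper's own route: the paper obtains Corollary~\ref{coro:niki14} as an immediate consequence of Theorem~\ref{thm:A_alpha bound} at $\alpha=\frac12$ via $q(G)=2\rho_{\frac12}(G)$, and your algebra for the radicand and your transfer of the $\alpha$-independent equality clause are accurate. One caveat comes from the paper rather than from your reasoning: read literally with the $\min$, the ``only if'' direction fails whenever a $\Delta$-regular component attains $q(G)=2\Delta$ (e.g.\ $G=C_4\cup K_2$ has $q(G)=4=2\Delta<\frac{3+\sqrt{33}}{2}$, yet $G$ is neither regular nor has a component of order $3$), so the characterization really describes equality with the second expression, which is also the case the paper's necessity argument treats.
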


The proofs of Theorem \ref{thm:alpha spectral two}, Theorem \ref{thm:alpha spectral} and Theorem \ref{thm:A_alpha bound} will be presented in the following sections.

\section{Proof of Theorem \ref{thm:alpha spectral two}}

For two nonnegative matrices $A=(a_{ij})$ and $B=(b_{ij})$ of the same order, we write $A\leq B$ if $a_{ij}\leq b_{ij}$ for all $i,j$. By the Perron–Frobenius Theorem, if $A\leq B$, then
$\rho(A)\leq \rho(B)$. Moreover, if $A\neq B$ and $B$ is irreducible, then $\rho(A)<\rho(B)$.

\begin{proof}[Proof of Theorem \ref{thm:alpha spectral two}]
We write $A_{\alpha}(G)$ in partitioned form:
$$
A_{\alpha}(G)=\left(\begin{array}{cc}
\alpha d_v & (1-\alpha)\boldsymbol{b}^{\mathrm{T}} \\
(1-\alpha)\boldsymbol{b} & B_{\alpha}
\end{array}\right),
$$
where $\boldsymbol{b}$ is the indicator vector of $N_G(v)$ in $G-v$ and
$B_{\alpha}\leq A_{\alpha}(G-v)+\alpha I$. For simplicity, 
let $\lambda=\rho_{\alpha}(G)$, $\mu=\rho_{\alpha}(G-v)$ and $\theta=\rho(B_\alpha)$. Clearly, $\boldsymbol{b}^{\mathrm{T}} \boldsymbol{b}=d_v$ and $\theta\leq \mu+\alpha$. If $\lambda=\mu$, the result holds immediately. If $\lambda=\theta$, then $G$ is disconnected which implies $\lambda=\mu$. Thus, we may assume that $\lambda>\mu$ and $\lambda>\theta$.

Since $\lambda > \theta$, $\lambda I-B_{\alpha}$ is a positive definite  invertible matrix. By computing the Schur complement of $\lambda I-B_{\alpha}$ in $\lambda I-A_{\alpha}(G)$, we obtain
$$
0=\operatorname{det}(\lambda I-A_{\alpha}(G))=\operatorname{det}(\lambda I-B_{\alpha}) \cdot\left(\lambda-\alpha d_v-(1-\alpha)^2\boldsymbol{b}^{\mathrm{T}}(\lambda I-B_{\alpha})^{-1} \boldsymbol{b}\right),
$$
which implies $\lambda-\alpha d_v=(1-\alpha)^2\boldsymbol{b}^{\mathrm{T}}(\lambda I-B_{\alpha})^{-1} \boldsymbol{b}$ as $\operatorname{det}(\lambda I-B_{\alpha}) \neq 0$. Now, let $t$ be any eigenvalue of $B_{\alpha}$. Since $|t| \leq \theta<\lambda$, we have
$$
f(t):=\frac{\lambda+t}{\lambda^2-\theta^2}-\frac{1}{\lambda-t}=\frac{\theta^2-t^2}{\left(\lambda^2-\theta^2\right)(\lambda-t)}\geq 0.
$$
Since $f(t)$ is an eigenvalue of the matrix $f(B_{\alpha})=\frac{\lambda I+B_{\alpha}}{\lambda^2-\theta^2}-(\lambda I-B_{\alpha})^{-1}$ for any eigenvalue $t$ of $B_{\alpha}$, $f(B_{\alpha})$ is a positive semidefinite matrix. Therefore, we obtain
\begin{equation}\label{eq:eq4}
0\leq \boldsymbol{b}^{\mathrm{T}}f(B_{\alpha})\boldsymbol{b} = 
\frac{\lambda \boldsymbol{b}^{\mathrm{T}}\boldsymbol{b}+\boldsymbol{b}^{\mathrm{T}}B_{\alpha}\boldsymbol{b}}{\lambda^2-\theta^2}-\boldsymbol{b}^{\mathrm{T}}(\lambda I-B_{\alpha})^{-1} \boldsymbol{b} =
\frac{\lambda d_v+\boldsymbol{b}^{\mathrm{T}}B_{\alpha}\boldsymbol{b}}{\lambda^2-\theta^2}-\frac{\lambda-\alpha d_v}{(1-\alpha)^2}.
\end{equation}

Now, we shall estimate $\boldsymbol{b}^{\mathrm{T}}B_{\alpha}\boldsymbol{b}$. 
Let $D=D(G-v)$ and $B=A(G-v)$. Then 
$$
\begin{aligned}
\boldsymbol{b}^{\mathrm{T}}B_{\alpha}\boldsymbol{b}
& =\boldsymbol{b}^{\mathrm{T}}(A_{\alpha}(G-v))\boldsymbol{b}+ \alpha d_v \\
& = \alpha \boldsymbol{b}^{\mathrm{T}} D\boldsymbol{b} +(1-\alpha)\boldsymbol{b}^{\mathrm{T}} B \boldsymbol{b}+ \alpha d_v\\
& = \alpha \left(\sum_{u\in N_G(v)} d_{G-v}(u)+ d_v\right)+(1-\alpha) 2|E(G[N(v)])| \\ 
& = \alpha \left(2|E(G[N(v)])|+ d_G(N(v))\right)+(1-\alpha) 2|E(G[N(v)])| \\
& = \alpha d_G(N(v)) + 2|E(G[N(v)])|\\
& \leq \alpha d_v(n-d_v) + 2|E(G[N(v)])|, 
\end{aligned}
$$
where $d_G(N(v))$ denotes the number of edges between $N(v)$ and $V(G)\setminus N(v)$ in $G$.

To the aim, we first estimate $m:=|E(G[N(v)])|$. 
Let $H=G[N(v) \cup \{v\}]$ be the subgraph of $G$ induced by $N(v) \cup \{v\}$. Then $|V(H)|=d_v+1$ and $|E(H)|=d_v+m$. Since 
\begin{equation}\label{eq:eq5}
\lambda \geq \rho_{\alpha}(H) \geq \frac{2|E(H)|}{|V(H)|}=\frac{2(d_v+m)}{d_v+1},
\end{equation}
then 
\begin{equation}\label{eq:eq6}
2m\leq \lambda(d_v+1)-2d_v. 
\end{equation}
Note that
\begin{equation}\label{eq:eq7}
2m=2|E(G[N(v)])| \leq d_v(d_v-1).
\end{equation}
Let $g(k)= k(d_v-\lambda)+\lambda(d_v+1)-2d_v$. Then $g(k)$ is a linear function of $k$. By \eqref{eq:eq6} and \eqref{eq:eq7}, we have $g(0)\geq 2m$ and $g(d_v+1)=d_v(d_v-1)\geq 2m$,
and so
$$
g(k)=k(d_v-\lambda)+\lambda(d_v+1)-2d_v\geq 2m, 
$$
for each $0\leq k\leq d_v+1$, 
which implies $\boldsymbol{b}^{\mathrm{T}}B_{\alpha}\boldsymbol{b}\leq \alpha d_v(n-d_v)+g(k)$. Substituting this and $\theta\leq \mu+\alpha$ into \eqref{eq:eq4} yields
$$
\begin{aligned}
\lambda^2 & \leq \theta^2+ \frac{(1-\alpha)^2}{\lambda -\alpha d_v}(\lambda d_v +  \boldsymbol{b}^{\mathrm{T}}B_{\alpha}\boldsymbol{b} )\\
& \leq \theta^2+ \frac{(1-\alpha)^2}{\lambda -\alpha d_v}( \lambda d_v+ \alpha d_v(n-d_v) + g(k))\\
&= \theta^2+ (1-\alpha)^2 (2d_v-1) + \frac{(1-\alpha)^2}{\lambda-\alpha d_v}(\alpha d_v(n+d_v-1) + (k-2)(d_v-\lambda)) \\
& \leq  (\mu+\alpha)^2+ (1-\alpha)^2 (2d_v-1) + \frac{(1-\alpha)^2}{\lambda-\alpha d_v}(\alpha d_v(n+d_v-1) + (k-2)(d_v-\lambda)),
\end{aligned}
$$
which is the desired inequality.

Now, we consider the equality case for the connected graph $G$. For the sufficiency, if $G\cong K_n$, then $\rho_{\alpha}(G)=n-1$ and $\rho_{\alpha}(G-v)=n-2$; if $G\cong K_{1,n-1}$, $\alpha =0$, $d_v=1$  and $k=2$, then $\rho_{\alpha}(G)=\sqrt{n-1}$ and $\rho_{\alpha}(G-v)=\sqrt{n-2}$. The equality holds.  

Next, we prove the necessity. If the equality holds in \eqref{eq:eq1}, we have 
$$
\theta=\mu+\alpha \text{ and } \boldsymbol{b}^{\mathrm{T}}f(B_\alpha)\boldsymbol{b}=0. 
$$
Since $f(B_\alpha)$ is a real symmetric and positive semidefinite matrix, there exists a real matrix $C$ such that $f(B_\alpha)=C^{\operatorname{T}}C$. Thus, 
$$
\boldsymbol{b}^{\mathrm{T}}f(B_\alpha)\boldsymbol{b}=\boldsymbol{b}^{\mathrm{T}}C^{\operatorname{T}}C\boldsymbol{b}=0, 
$$
that is $C\boldsymbol{b}=\mathbf{0}$ and so 
$f(B_\alpha)\boldsymbol{b}=C^{\operatorname{T}}C \boldsymbol{b}=\mathbf{0}$. Therefore, 
$$
\mathbf{0}=(\lambda I-B_\alpha)f(B_\alpha)\boldsymbol{b}=(\lambda I-B_\alpha)\left(\frac{\lambda I+B_{\alpha}}{\lambda^2-\theta^2}-(\lambda I-B_{\alpha})^{-1}\right)\boldsymbol{b}, 
$$
which implies $B^2_\alpha\boldsymbol{b}=\theta^2 \boldsymbol{b}=(\mu+\alpha)^2\boldsymbol{b}$.

If $\lambda \leq d_v$, then $g(k)\geq g(0)\geq 2m$. 
If the equality holds in \eqref{eq:eq1},  then $g(k)=g(0)=2m$, and so $k(d_v-\lambda)=0$. It follows that $d_v=\lambda$ or $k=0$.  
If $d_v=\lambda$, then all equalities hold in \eqref{eq:eq6} and \eqref{eq:eq7}, and so $\rho_\alpha(G)=\lambda=d_v=\rho_\alpha(H)$ and $2|E(G[N(v)])|=d_v(d_v-1)$. It follows that $G[N(v)]\cong K_{d_v}$ and so $H\cong K_{d_v+1}$. 
If $k=0$, then the equality holds in \eqref{eq:eq5}, which implies $\rho_\alpha(G)=\lambda=\rho_\alpha(H)=\frac{2|E(H)|}{|V(H)|}$. By the Rayleigh quotient, it follows that $H$ is a regular graph and so $H\cong K_{d_v+1}$. Since $G$ is connected and $\rho_\alpha(G)=\rho_\alpha(H)$, we have $H\cong G$ which means $G\cong K_{n}$.

If $\lambda>d_v$, then $d_v<n-1$ and $g(k)\geq g(d_v+1)\geq 2m$. 
If the equality holds in \eqref{eq:eq1}, then $g(k)=g(d_v+1)=2m$,
and so $k=d_v+1$ and $2m=d_v(d_v-1)$. We may assume that  $\boldsymbol{b}^{\rm T}=(\boldsymbol{1}^{\rm T},\boldsymbol{0}^{\rm T})$ and 
$$
B^2_\alpha=\left(\begin{array}{cc}
B_1 & B_2^{\rm T}\\
B_2 & B_3
\end{array}
\right),
$$
where $B_1$ is the sub-matrix corresponding to $N(v)$. 

By note of $B_\alpha^2\boldsymbol{b}=\theta^2\boldsymbol{b}$, we can get $B_2=O$, which implies that there is no path of length two from $N(v)$ to $V(G)\setminus N[v]$.
Since $G$ is connected, there is at least one edge
$uw$ between $u\in N(v)$ and $w\in V(G)\setminus N[v]$.
Let $B_\alpha=(b_{ij})$. 
Then the $uw$-entry of $B_\alpha^2$ satisfies 
$0=\sum_{j\in V(G-v)}b_{uj}b_{jw}\geq b_{uu}b_{uw}=\alpha(1-\alpha)d_u$. Thus $\alpha=0$.
If $d_v\geq 2$, then by $2m=d_v(d_v-1)$, we can get $G[N(v)]$ is a complete graph, and so there exists a path of length two between $N(v)$ and $V(G)\setminus N[v]$, a contradiction. Thus, $d_v=1$, $k=2$ and $N(v)=\{u\}$. Since
there is no path of length two from $u$ to $V(G)\setminus \{v,u\}$, then $d_u=n-1$ and so $G\cong K_{1,n-1}$. The proof is completed.
\end{proof}

\section{A Short Proof of Theorem \ref{thm:alpha spectral}}

\begin{proof}[Proof of Theorem \ref{thm:alpha spectral}]
We write $A_{\alpha}(G)$ in partitioned form:
$$
A_{\alpha}(G)=\left(\begin{array}{cc}
\alpha d_v & (1-\alpha)\boldsymbol{b}^{\mathrm{T}} \\
(1-\alpha)\boldsymbol{b} & B_{\alpha}
\end{array}\right),
$$
where $\boldsymbol{b}$ is the indicator vector of $N_G(v)$ in $G-v$ and
$B_{\alpha}\leq A_{\alpha}(G-v)+\alpha I$. For simplicity, 
let $\lambda=\rho_{\alpha}(G)$, $\mu=\rho_{\alpha}(G-v)$ and $\theta=\rho(B_\alpha)$. Clearly, $\boldsymbol{b}^{\mathrm{T}} \boldsymbol{b}=d_v$ and $\theta\leq \mu+\alpha$. If $\lambda=\mu$, the result holds immediately. If $\lambda=\theta$, then $G$ is disconnected which implies $\lambda=\mu$. Thus, we may assume that $\lambda>\mu$ and $\lambda>\theta$.

Since $\lambda > \theta$, $\lambda I-B_{\alpha}$ is a positive definite  invertible matrix. By computing the Schur complement of $\lambda I-B_{\alpha}$ in $\lambda I-A_{\alpha}(G)$, we obtain
$$
0=\operatorname{det}(\lambda I-A_{\alpha}(G))=\operatorname{det}(\lambda I-B_{\alpha}) \cdot\left(\lambda-\alpha d_v-(1-\alpha)^2\boldsymbol{b}^{\mathrm{T}}(\lambda I-B_{\alpha})^{-1} \boldsymbol{b}\right),
$$
which implies $\lambda-\alpha d_v=(1-\alpha)^2\boldsymbol{b}^{\mathrm{T}}(\lambda I-B_{\alpha})^{-1} \boldsymbol{b}$ as $\operatorname{det}(\lambda I-B_{\alpha}) \neq 0$. Now, let $t$ be any eigenvalue of $B_{\alpha}$. Since $|t| \leq \theta<\lambda$, we have
$$
f(t):=1-\frac{\lambda-\theta}{\lambda-t}=\frac{\theta-t}{\lambda-t}\geq 0.
$$
Since $f(t)$ is an eigenvalue of the matrix $f(B_{\alpha})=I - (\lambda-\theta)(\lambda I-B_{\alpha})^{-1}$ for any eigenvalue $t$ of $B_{\alpha}$, $f(B_{\alpha})$ is a positive semidefinite matrix. 
Therefore, we obtain
\begin{equation}\label{eq:eq8}
0\leq \boldsymbol{b}^{\mathrm{T}}f(B_{\alpha})\boldsymbol{b} = \boldsymbol{b}^{\mathrm{T}}\boldsymbol{b}-(\lambda-\theta)\boldsymbol{b}^{\mathrm{T}}(\lambda I-B_{\alpha})^{-1} \boldsymbol{b} =
d_v-(\lambda-\theta)\frac{\lambda-\alpha d_v}{(1-\alpha)^2}.
\end{equation}
Thus, by \eqref{eq:eq8} and $\theta\leq \mu+\alpha$, we have
$$
\lambda \leq \theta + \frac{(1-\alpha)^2d_v}{\lambda-\alpha d_v}\leq \mu+\alpha + \frac{(1-\alpha)^2d_v}{\lambda-\alpha d_v},
$$
which is the desired inequality.

Now, we proceed to characterize the case where equality is attained in \eqref{eq:eq2}. For the sufficiency, if $d_v=n-1$ and $G-v$ is regular, then $G-v$ is $\mu$-regular and the quotient matrix of $G$ is 
$$
\left(\begin{array}{cc}
\alpha (n-1) & (1-\alpha)(n-1) \\
(1-\alpha) &  \mu+\alpha 
\end{array}
\right), 
$$
which yields $(\lambda-\alpha(n-1))(\lambda-\mu-\alpha)=(1-\alpha)^2(n-1)$ and the equality is attained.  

Next, we prove the necessity. If the equality holds in \eqref{eq:eq2}, we have 
$$
\theta=\mu+\alpha \text{ and } \boldsymbol{b}^{\mathrm{T}}f(B_\alpha)\boldsymbol{b}=0.
$$
Since $f(B_\alpha)$ is a real symmetric and positive semidefinite matrix, 
$\boldsymbol{b}^{\mathrm{T}}f(B_\alpha)\boldsymbol{b}=0$ implies
$f(B_\alpha)\boldsymbol{b}=\mathbf{0}$. Therefore, 
$$
\mathbf{0}=(\lambda I-B_\alpha)f(B_\alpha)\boldsymbol{b}=(\lambda I-B_\alpha)\boldsymbol{b} - (\lambda-\theta)\boldsymbol{b}, 
$$
which implies $B_\alpha\boldsymbol{b}=\theta \boldsymbol{b}$.

Let $G_1,\dots,G_s$ be the components of $G-v$ and $B_\alpha={\rm diag}(B_1,\dots,B_s)$ with a suitable relabeling of the vertices of $G-v$. Suppose that $\boldsymbol{b}^{\operatorname{T}}=(\boldsymbol{b}_1^{\operatorname{T}},\dots,\boldsymbol{b}_s^{\operatorname{T}})$ such that $B_i\boldsymbol{b}_i=\theta\boldsymbol{b}_i$ for each $i\in \{1,\dots,s\}$. Since $G$ is connected and $v$ has at least one neighbor in each $G_i$, we have $\boldsymbol{b}_i\neq \mathbf{0}$ for each $i\in \{1,\dots,s\}$. Therefore, $\boldsymbol{b}_i$ is an eigenvector of $B_i$ corresponding to the eigenvalue $\theta$ for each $i\in \{1,\dots,s\}$. Since $\theta$ is the largest eigenvalue of $B_\alpha$, we have $\theta$ is also the largest eigenvalue of each $B_i$ and so $\boldsymbol{b}_i$ is a positive eigenvector. Since $\boldsymbol{b}_i$ is an indicator vector, we conclude that $\boldsymbol{b}_i=\boldsymbol{1}$. Therefore, $d_v=n-1$ and 
$B_{\alpha} = A_{\alpha}(G-v)+\alpha I$. Thus, 
$$
(A_{\alpha}(G-v)+\alpha I)\boldsymbol{1}=B_\alpha\boldsymbol{1}=\theta \boldsymbol{1}=(\mu+\alpha) \boldsymbol{1}, 
$$
which yields $A_{\alpha}(G-v)\boldsymbol{1}=\mu \boldsymbol{1}$. Since each row sum of $A_{\alpha}(G-v)$ is the degree of corresponding vertex,   $G-v$ is $\mu$-regular.
\end{proof}

\section{Proof of Theorem \ref{thm:A_alpha bound}}

To the aim, we first clarify the dependence of the right-hand side of \eqref{eq:eq3} on $\delta$ and $\Delta$ via the Lemma  below.

\begin{lemma}\label{lem:f(a,b)}
Let $n \geq 1$ and $0\leq a <\frac{2m}{n} < b\leq n-1$. Then the function
$$
f(a,b)=  \frac{1}{2}(\alpha(b+1)+a-1)
+ \frac{1}{2}\sqrt{(\alpha(b+1)-a-1)^2+4(1-\alpha)(2m-na)}
$$
is decreasing in $a$ for $\alpha\in [0,1)$ and increasing in $b$ for $\alpha\in (0,1]$.
\end{lemma}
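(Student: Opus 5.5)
The function $f(a,b)$ is the larger root of a quadratic in an auxiliary variable $x$, and I plan to read off both monotonicity claims from how that quadratic moves.

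Put $c=\alpha(b+1)$. Then $f(a,b)$ is the larger root of
$$
p(x)=x^2-(c+a-1)x+(c(a-1)+\alpha\cdot 0) -(1-\alpha)(2m-na)+\text{(correction)}.
$$
Rather than fix the constant term by hand, I will get it from the standard identity. For roots $r_{1,2}=\tfrac12\bigl(S\pm\sqrt{S^2-4P}\bigr)$ with $S=c+a-1$, we need
$$
S^2-4P=(c-a-1)^2+4(1-\alpha)(2m-na).
$$
Since $S^2-(c-a-1)^2=4c(a-1)+4c=4ca$, this gives
$$
P=ca-(1-\alpha)(2m-na).
$$
So $f(a,b)$ is the larger root of
$$
p_{a,b}(x)=x^2-(c+a-1)x+ca-(1-\alpha)(2m-na).
$$
Because $p_{a,b}$ is a monic quadratic and $x=f$ is its larger root, $p_{a,b}$ is increasing at that root, i.e. $p'_{a,b}(f)=\sqrt{\cdot}\ge0$. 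By the implicit function theorem,
$$
\frac{\partial f}{\partial a}=-\frac{\partial_a p}{\partial_x p}, \qquad \frac{\partial f}{\partial b}=-\frac{\partial_b p}{\partial_x p}.
$$
The sign of each derivative is therefore opposite to the sign of $\partial_a p$ or $\partial_b p$ evaluated at $x=f$.

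\textbf{Derivative in $a$.} We have $\partial_a p=-x+c+(1-\alpha)n$. I need this to be positive at $x=f$, which amounts to showing $f<c+(1-\alpha)n$. The cleanest way is to check that $p_{a,b}\bigl(c+(1-\alpha)n\bigr)>0$ and that $c+(1-\alpha)n$ lies to the right of the vertex $(c+a-1)/2$. The second fact is easy since $a\le n-1$. For the first, substitute $x_0=c+(1-\alpha)n$:
$$
p(x_0)=(x_0-c)(x_0-a+1)-(1-\alpha)(2m-na) .
$$
Indeed, $p(x)=(x-c)(x-a+1)-c-(1-\alpha)(2m-na)$ after recomputing; I will redo this factorization carefully. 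Up to that correction, $p(x_0)$ equals $(1-\alpha)n(\alpha(b+1)+(1-\alpha)n-a+1)-(1-\alpha)(2m-na)$, which is positive once we use $2m\le n\,b$ and $b\le n-1$. I expect the sign bookkeeping in this step, including the $-c$ term, to be the main obstacle. If the direct substitution gets messy, my fallback is to differentiate the explicit formula and show that $|\partial_a\sqrt{\cdot}|<1$ via the bound $(c-a-1)+2(1-\alpha)n>\sqrt{\cdot}$, which reduces to a squared inequality using $2m<nb\le n(n-1)$.

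\textbf{Derivative in $b$.} We have $\partial_b p=\alpha(a-x)$, which is negative whenever $x=f>a$ and $\alpha>0$. I will show $f>a$ via $p(a)<0$:
$$
p(a)=a^2-(c+a-1)a+ca-(1-\alpha)(2m-na)=a-(1-\alpha)(2m-na).
$$
Hmm, when $\alpha=1$ the expression $p(a)=a$ is not negative. Instead I will compare $f$ directly with the explicit formula: $f\ge\tfrac12(c+a-1)+\tfrac12|c-a-1|\ge\max(c,a+1)-\ldots$, and in fact $f\ge\max(c-1+\ldots)$. Simplest is $f\ge \tfrac12\bigl(S+|c-a-1|\bigr)\ge a$, which holds since $S+(c-a-1)\ge 0$ fails only if $c<1$. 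So I will handle $f>a$ through the term $2m-na>0$: it makes the square root strictly exceed $|c-a-1|$, and that gives $f>\max(c-1,a)$ for $\alpha<1$. For $\alpha=1$, $f=\max(c-1,a)\cdot$ (linear in $b$), which is trivially nondecreasing in $b$. Combining the two derivative signs gives the lemma.
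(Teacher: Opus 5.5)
Your quadratic-root framework is sound, but you computed the constant term wrongly, and this breaks the $a$-step as you set it up. In fact
\[
S^2-(c-a-1)^2=\bigl(S-(c-a-1)\bigr)\bigl(S+(c-a-1)\bigr)=2a\,(2c-2)=4a(c-1),
\]
not $4ca$. So $f$ is the larger root of
\[
p(x)=(x-a)(x-c+1)-(1-\alpha)(2m-na).
\]
Your $p_{a,b}$ exceeds this by $a$, so $f$ is not its root ($p_{a,b}(f)=a$). Hence $\partial_a p=(1-\alpha)n-(x-c+1)$, and $\partial f/\partial a=(f-x_0)/M$, where $x_0=c-1+(1-\alpha)n$ and $M=p'(f)$ is the square root. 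The inequality you planned to prove, $f<c+(1-\alpha)n$, is weaker than $f<x_0$ and does not give $\partial f/\partial a<0$. That is also why positivity of your $p(x_0)$ came out so easily.

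With the correct $p$ the step works. First, $p(x_0)=(1-\alpha)\bigl[\alpha nb+(1-\alpha)n(n-1)-2m\bigr]>0$, since $2m<nb\le n(n-1)$ and $\alpha<1$. Second, $x_0$ lies right of the vertex, because $2x_0-S=\alpha(b-a)+(1-\alpha)(2n-a-1)>0$. This uses $a<b$, not only $a\le n-1$. Since $2(x_0-f)=(2x_0-S)-M$, these two facts are exactly the paper's inequality $M<\alpha(b+1)-a-1+2(1-\alpha)n$ together with positivity of its right-hand side. The paper simply differentiates the explicit formula.

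The same correction removes your difficulty in the $b$-step. For $\alpha<1$ we get $p(a)=-(1-\alpha)(2m-na)<0$, so $f>a$ and $\partial f/\partial b=\alpha(f-a)/M>0$. For $\alpha=1$, $p(x)=(x-a)(x-b)$ and $f=b$. Your replacement argument, $M>|c-a-1|$ giving $f>\max\{a,c-1\}$, is also correct and is essentially what the paper does.

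In your fallback for the $a$-step, what you need is $\partial_a\sqrt{\cdot}<-1$, not $|\partial_a\sqrt{\cdot}|<1$. The bound $(c-a-1)+2(1-\alpha)n>\sqrt{\cdot}$ that you name is the right one. Once you check its left side is positive before squaring, that fallback is precisely the paper's proof.
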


\begin{proof}
Let $M=\sqrt{(\alpha(b+1)-a-1)^2+4(1-\alpha)(2m-na)}$. On one hand, 
$$
\frac{\partial f(a,b)}{\partial a} = \frac{M-[\alpha(b+1)-a-1+2(1-\alpha)n]}{2M}.
$$
Since $\alpha(b+1)-a-1+2(1-\alpha)n>(1-\alpha)(2n-a-1)\geq 0$ and  
$$
\begin{aligned}
 &M^2-[\alpha(b+1)-a-1+2(1-\alpha)n]^2 \\
 =&4(1-\alpha)[2m-na-(\alpha(b+1)-a-1)n-(1-\alpha)n^2]\\
 =&4(1-\alpha)[2m-(1-\alpha)n(n-1)-\alpha n b]\\
 < &4(1-\alpha)[2m-(1-\alpha)n(n-1)-2m \alpha] \\
 = &4(1-\alpha)^2[2m-n(n-1)]< 0, 
\end{aligned}
$$
we have $M<\alpha(b+1)-a-1+2(1-\alpha)n$. 
Thus, $\frac{\partial f}{\partial a}<0$ and so $f(a,b)$ is decreasing in $a$ for $\alpha\in [0,1)$. 

On the other hand, 
$$
\frac{\partial f(a,b)}{\partial b} = \alpha \cdot \frac{M+[\alpha(b+1)-a-1]}{2M}.
$$
If $\alpha(b+1)\geq a+1$, then $\frac{\partial f}{\partial b}>0$ for $\alpha\in (0,1)$ as $M>0$. If $\alpha(b+1)< a+1$, then $M > a+1-\alpha(b+1)$ for $\alpha\in (0,1)$ and so $\frac{\partial f}{\partial b}>0$. If $\alpha =1$, then $f(a,b)=b$. Hence, $f(a,b)$ is increasing in $b$ for $\alpha\in (0,1]$.
\end{proof}

\begin{proof}[Proof of Theorem \ref{thm:A_alpha bound}]
Let $r_i(C)$ denote the $i$-th row-sum of a matrix $C$ and let $d_v$ be the degree of vertex $v$ in $G$. Denote $A_\alpha = A_\alpha(G)= \alpha D(G)+(1-\alpha)A(G)$ and $\rho_\alpha(G)=  \rho(A_\alpha(G))$. Define the matrix
$M_\alpha$ as 
$$
M_\alpha=A_\alpha^2-(\alpha(\Delta+1)+\delta-1)A_\alpha.
$$
Then, 
$$
r_v\left(A_\alpha \right)= \alpha d_v +(1-\alpha) d_v = d_v.
$$
We will show that the row-sums of $M_\alpha$ do not exceed $2(1-\alpha)m-((1-\alpha)(n-1) +\alpha \Delta)\delta$. Recall that 
$$
r_v\left(A_\alpha^2\right)= \alpha d_v^2+(1-\alpha) \sum_{u \in N(v)} d_u.
$$
Now, it is clear that 
$$
\sum_{u \in N(v)} d_u=\sum_{u \in V(G)} d_u-\sum_{ u \in V(G)\setminus N(v)} d_u \leq 2 m-d_v-\left(n-d_v-1\right) \delta,
$$
and so
$$
\begin{aligned}
r_v(M_\alpha) & =\alpha d_v^2+ (1-\alpha) \sum_{u \in N(v)} d_u-(\alpha(\Delta+1)+\delta-1) d_v \\
& \leq (1-\alpha)(2m- d_v-(n-d_v-1) \delta )+\alpha d_v^2-(\alpha(\Delta+1)+\delta-1) d_v \\
& =(1-\alpha)(2m-(n-1) \delta)+ \alpha d_v^2+ (1-\alpha)(\delta-1)d_v -(\alpha(\Delta+1)+\delta-1) d_v \\
& =(1-\alpha)(2m-(n-1) \delta)+ \alpha d_v^2-\alpha(\Delta+\delta) d_v.
\end{aligned}
$$
For the purpose of eliminating the term $\alpha d_v^2$, we shall prove that for every vertex $v \in V(G)$,
$$
\alpha d_v^2-\alpha(\Delta+\delta) d_v \leq -\alpha \Delta\delta.
$$
In fact, the function $f(t)=t^2-(\Delta+\delta) t$ is convex and, in light of $\Delta \geq d_v \geq \delta$, we conclude that
$$
\begin{aligned}
d_v^2- (\Delta+\delta) d_v & =f(d_v) \leq \max \{f(\Delta), f(\delta)\} \\
& = \max \left\{\Delta^2-(\Delta+\delta) \Delta, \delta^2-(\Delta+\delta) \delta\right\} \\
& =-\Delta\delta.
\end{aligned}
$$
Hence, by $\alpha\geq 0$, 
$$
\alpha d_v^2 \leq \alpha(\Delta+\delta) d_v-\alpha \Delta\delta.
$$
Substituting this into $r_v(M_\alpha)$, we have 
$$
\begin{aligned}
r_v(M_\alpha) & \leq 2(1-\alpha)m-(1-\alpha)(n-1) \delta+ \alpha d_v^2-\alpha(\Delta+\delta) d_v \\
& \leq 2(1-\alpha)m-((1-\alpha)(n-1) +\alpha \Delta)\delta.
\end{aligned}
$$
Denote by $\rho(M_\alpha)$ the largest eigenvalue of $M_\alpha$. Then we obtain
$$
\rho(M_\alpha) \leq \max_{v \in V(G)} r_v(M_\alpha) \leq 2(1-\alpha)m-((1-\alpha)(n-1) +\alpha \Delta)\delta.
$$
On the other hand, writing $\rho_\alpha$ for $\rho_\alpha(G)$, it follows that $\rho_\alpha^2-(\alpha(\Delta+1)+\delta-1)\rho_\alpha$ is an eigenvalue of $M_\alpha$ and so,
$$
\rho_\alpha^2-(\alpha(\Delta+1)+\delta-1)\rho_\alpha-2(1-\alpha)m+((1-\alpha)(n-1) +\alpha \Delta)\delta \leq 0 .
$$
Solving this inequality, and by $\rho_{\alpha}(G)\leq \Delta$, we establish \eqref{eq:eq3}.

We now characterize the equality case in \eqref{eq:eq3}. First, assume that $G$ is a  connected graph. If the equality holds in \eqref{eq:eq3}, then all row-sums of $M_\alpha$ are equal and so
$$
\sum_{u \in V(G)\setminus N[v]} d_u= (n-d_v-1) \delta
$$
for each $v \in V(G)$. Therefore, if $uv\notin E(G)$, then $d_u\neq n-1$ and $d_v\neq n-1$, and so $d_v=\delta$ and $d_u=\delta$, which implies that $G$ is regular or each vertex is of degree either $\delta$ or $n-1$.

Now suppose that $G$ is not connected and the equality holds in \eqref{eq:eq3}. Then $G$ contains a component $G_1$ such that $\rho_\alpha(G_1)=\rho_\alpha(G)$. Set $n_1=|V(G_1)|$, $m_1=|E(G_1)|$, $\Delta_1=\Delta(G_1)$, and $\delta_1=\delta(G_1)$. In view of $\delta \leq \delta_1$ and $\Delta \geq \Delta_1$, inequality \eqref{eq:eq3} and Lemma \ref{lem:f(a,b)} imply that
$$
\begin{aligned}
\rho_\alpha(G_1)  \leq & \frac{1}{2}(\alpha(\Delta_1+1)+\delta_1-1) 
 + \frac{1}{2}\sqrt{(\alpha(\Delta_1+1)-\delta_1-1)^2+4(1-\alpha)(2m_1-n_1\delta_1)} \\
\leq & \frac{1}{2}(\alpha(\Delta+1)+\delta-1) 
 + \frac{1}{2}\sqrt{(\alpha(\Delta+1)-\delta-1)^2+4(1-\alpha)(2m_1-n_1\delta)} \\
\leq & \frac{1}{2}(\alpha(\Delta+1)+\delta-1) 
 + \frac{1}{2}\sqrt{(\alpha(\Delta+1)-\delta-1)^2+4(1-\alpha)(2m-n\delta)}\\
 = & \rho_\alpha(G), 
\end{aligned}
$$
where the last inequality holds as $2(m-m_1)\geq (n-n_1)\delta$. Since $\rho_\alpha(G_1)=\rho_\alpha(G)$, it follows that all inequalities of the above chain are equalities. In particular, all vertices belong to $G-G_1$ have degree precisely $\delta$. Therefore, for any given $v \in V(G_1)$ and for each $u \in V(G_1)$, 
$$
\sum_{u \in V(G_1)\setminus N[v]} d_{G_1}(u)= (n_1-d_{G_1}(v)-1) \delta. 
$$
Thus, either $G_1$ is $\delta$-regular or $n_1=\Delta+1$ and each vertex of $G_1$ is of degree $\delta$ or $\Delta$. This completes the proof of the necessity of the condition for equality in \eqref{eq:eq3}. 

Now, we prove the sufficiency of the condition. We only consider the connected graph. 
Let $G$ be a connected graph of order $n$ in which each vertex is of degree $\delta$ and $n-1$. The quotient matrix of $A_\alpha(G)$ is
$$
P=\left(\begin{array}{cc}
\alpha n-1+k(1-\alpha)  & (1-\alpha)(n-k) \\
(1-\alpha)k & \delta-(1-\alpha)k
\end{array}
\right),
$$
where $k$ is the number of the vertices of degree $n-1$. 
By $2m-n\delta=k(n-1-\delta)$, we have 
$$
\begin{aligned}
{\rm det} (\lambda I -P) & =\lambda^2-(\alpha n+\delta-1)\lambda +[\alpha n-1+k(1-\alpha)][\delta-(1-\alpha)k]-(1-\alpha)^2 k(n-k) \\
& =\lambda^2-(\alpha n+\delta-1)\lambda + (\alpha n-1)\delta -(1-\alpha) k (n-\delta-1)\\
& = \lambda^2-(\alpha n+\delta-1)\lambda + (\alpha n-1)\delta - (1-\alpha)(2m-n\delta).
\end{aligned}
$$
Thus, 
$$
\rho_\alpha(G) = \frac{1}{2}(\alpha n+\delta-1)+\frac{1}{2}\sqrt{(\alpha n-\delta-1)^2+(1-\alpha)(2m-n\delta)}, 
$$
which satisfies the equality in \eqref{eq:eq3}. 
The proof is completed.
\end{proof}

Huang, Lin and Xue \cite{hlx20} obtained an upper bound on $\rho_{\alpha}(G)$ for $\alpha \in [0,1)$, which generalizes the upper bound on $\rho_0(G)$ independently obtained by Hong, Shu, Fang \cite{hsf01} and Nikiforov \cite{niki02}. 

\begin{theorem}{\rm (Huang, Lin and Xue \cite{hlx20})}\label{thm:HLX20}
Let $G$ be a graph of order $n$, with $m$ edges, and with  maximum degree $\Delta$ and minimum degree $\delta$. If $\alpha \in [0,1)$, then
\[
\rho_{\alpha}(G) \leq \frac{1}{2}(1-\alpha)(\delta-1) + \frac{1}{2}\sqrt{(1-\alpha)^2(\delta-1)^2 + 4\big(\alpha\Delta^2 + (1-\alpha)(2m - (n-1)\delta)\big)}.
\]
The equality holds if and only if one of the following statements hold.
\begin{enumerate}[$(i)$]
    \item For $\alpha = 0$, $G$ is a disjoint union of a $\delta$-regular graph and a bidegree graph of order $\Delta+1$ and degree either $\delta$ or $\Delta$.
    \item For $0 < \alpha < 1$, $G$ is a $\Delta$-regular graph if $G$ is connected, and $G$ is the union of a complete graph with order $\Delta+1$ and a $\delta$-regular graph if $G$ is disconnected.
\end{enumerate}
\end{theorem}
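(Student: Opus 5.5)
The approach mirrors the row-sum technique used in the proof of Theorem \ref{thm:A_alpha bound}, but with a different quadratic polynomial in $A_\alpha$ chosen so that the $\alpha d_v^2$ term is absorbed differently. Set $c=(1-\alpha)(\delta-1)$ and consider $N_\alpha=A_\alpha^2-cA_\alpha$. Since $\rho_\alpha^2-c\rho_\alpha$ is an eigenvalue of $N_\alpha$ with a nonnegative eigenvector (the Perron vector of $A_\alpha$), it suffices to show that every row sum satisfies
\[
r_v(N_\alpha)\le \alpha\Delta^2+(1-\alpha)\bigl(2m-(n-1)\delta\bigr).
\]
With that bound, the quadratic inequality $\rho_\alpha^2-(1-\alpha)(\delta-1)\rho_\alpha-\bigl(\alpha\Delta^2+(1-\alpha)(2m-(n-1)\delta)\bigr)\le 0$ gives the stated bound on taking the positive root.

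For the row sums, I use $r_v(A_\alpha)=d_v$ and $r_v(A_\alpha^2)=\alpha d_v^2+(1-\alpha)\sum_{u\in N(v)}d_u$, together with the estimate $\sum_{u\in N(v)}d_u\le 2m-d_v-(n-d_v-1)\delta$ already used in the previous proof. This gives
\[
r_v(N_\alpha)\le \alpha d_v^2+(1-\alpha)\bigl(2m-(n-1)\delta\bigr)+(1-\alpha)(\delta-1)d_v-(1-\alpha)(\delta-1)d_v .
\]
The two linear terms cancel exactly, which is why $c$ was chosen as $(1-\alpha)(\delta-1)$. It remains to use $d_v\le\Delta$, so that $\alpha d_v^2\le\alpha\Delta^2$. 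Since $\rho(N_\alpha)\le\max_v r_v(N_\alpha)$, the inequality follows. One caveat: $N_\alpha$ may have negative entries, because $c$ can exceed the diagonal entries $\alpha d_v$. This is handled by testing $N_\alpha$ against the Perron vector $\mathbf{x}$ of $A_\alpha$ directly. Taking the coordinate $x_v$ of maximal value gives $(\rho_\alpha^2-c\rho_\alpha)x_v=\sum_u (A_\alpha^2)_{vu}x_u-c\sum_u (A_\alpha)_{vu}x_u$. To avoid sign issues with the $-c$ part, I write $\sum_u (A_\alpha)_{vu}x_u=\rho_\alpha x_v$ and bound the $A_\alpha^2$ part by $r_v(A_\alpha^2)x_v$. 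The resulting bound is then on $\rho_\alpha^2-c\rho_\alpha$ directly, which avoids any positivity requirement on $N_\alpha$.

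For equality, the row-sum bound must be tight at every vertex in the support of the Perron vector, that is, on a component $G_1$ with $\rho_\alpha(G_1)=\rho_\alpha(G)$. Tightness requires:
\begin{enumerate}[$(a)$]
\item all non-neighbours of $v$, including those in other components, have degree $\delta$;
\item whenever $\alpha>0$, every vertex $v$ of $G_1$ has $d_v=\Delta$.
\end{enumerate}
For $\alpha=0$, condition (a) alone yields, exactly as in the previous proof, that $G_1$ is regular or bidegree of order $\Delta+1$ with degrees $\delta,\Delta$, and that the other components are $\delta$-regular. This is case $(i)$. For $0<\alpha<1$, condition (b) makes $G_1$ $\Delta$-regular. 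If $G$ is disconnected, every vertex outside $G_1$ is a non-neighbour of each $v\in V(G_1)$, so the other components are $\delta$-regular. When $\delta<\Delta$, condition (a) then forbids non-neighbours of $v$ inside $G_1$, forcing $G_1\cong K_{\Delta+1}$. This gives case $(ii)$. Sufficiency is a direct check in each case, using quotient matrices as in the proof of Theorem \ref{thm:A_alpha bound}.

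I expect the main obstacle to be the equality analysis: tracking which vertices carry positive Perron weight when $G$ is disconnected, and reconciling condition (b) with condition (a), especially the degenerate subcase where $\Delta=\delta$ makes $G$ regular and the disconnected description collapses into the regular one.
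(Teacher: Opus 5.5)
\textbf{The gap: negative entries of $N_\alpha$.} At a maximal coordinate $x_v$, you replace $\sum_u(A_\alpha)_{vu}x_u$ by $\rho_\alpha x_v$ and bound $\sum_u(A_\alpha^2)_{vu}x_u\le r_v(A_\alpha^2)x_v$. This gives $\rho_\alpha^2-c\rho_\alpha\le r_v(A_\alpha^2)-c\rho_\alpha$, which is just $\rho_\alpha^2\le r_v(A_\alpha^2)$. The $c$-terms cancel, so you never reach $r_v(N_\alpha)=r_v(A_\alpha^2)-c\,d_v$. Reaching it would require $\rho_\alpha\ge d_v$, but at a maximal coordinate $\rho_\alpha x_v=\sum_u(A_\alpha)_{vu}x_u\le d_vx_v$, which is the opposite inequality.

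Two examples with $\alpha=0$ show the failure:
\begin{itemize}
\item For the equality case $G=K_2\vee\overline{K_{n-2}}$ with $n\ge 4$, your chain yields only $\rho^2-\rho\le 3n-5-\rho$. This is strictly weaker than the required $\rho^2-\rho\le 2n-4$, because $\rho<n-1$.
\item For $K_{2,b}$ with $b\ge 3$, the Perron vector is maximal at a vertex of degree $b$, and the row sum of $A^2-A$ there is $b<2b-\sqrt{2b}=\rho^2-\rho$. So no argument confined to the row of a maximal coordinate can succeed.
\end{itemize}

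\textbf{The repair.} The missing idea is the symmetry of $N_\alpha$. Since $\mathbf{1}^{T}N_\alpha\mathbf{x}=(N_\alpha\mathbf{1})^{T}\mathbf{x}$ for the nonnegative Perron vector $\mathbf{x}$, you get $(\rho_\alpha^2-c\rho_\alpha)\sum_w x_w=\sum_w r_w(N_\alpha)\,x_w\le\max_w r_w(N_\alpha)\sum_w x_w$. This needs no sign condition on $N_\alpha$. The same weighted average is what justifies your equality claim that every vertex in the support of $\mathbf{x}$ has a tight row sum; the maximal-coordinate argument only speaks about one vertex.

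\textbf{Comparison with the paper.} The paper does not prove this theorem. It only recovers the inequality from the quadratic of Theorem \ref{thm:A_alpha bound}, via $g(x)-f(x)=\alpha(\Delta+\delta)(x-\Delta)\le 0$ at $x=\rho_\alpha\le\Delta$. Its own step bounding the largest eigenvalue of $M_\alpha$ by the maximal row sum has the same defect: for $C_4$ and $\alpha=0$, $A^2-A$ has eigenvalue $6$ while every row sum is $2$. The same averaging repairs it. Your direct choice $c=(1-\alpha)(\delta-1)$, with the exact cancellation of the linear terms and $\alpha d_v^2\le\alpha\Delta^2$, is a cleaner route to this weaker bound than going through $M_\alpha$ and the convexity estimate.

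\textbf{Equality case.} After the repair, your row-sum computation and tightness conditions (a), (b) are correct. However, the subcase $\delta=\Delta$ is not a harmless collapse. Every regular graph attains equality for every $\alpha$; for $2C_5$ the bound equals $2$. So the disconnected clause of (ii) is literally false, and (i) must be read as including regular $G$. What your argument actually proves is the corrected characterization: $G$ is regular, or $G$ has a component $G_1$ such that all other components are $\delta$-regular and
\begin{itemize}
\item when $\alpha=0$, $G_1$ has order $\Delta+1$ with all degrees in $\{\delta,\Delta\}$;
\item when $0<\alpha<1$, $G_1\cong K_{\Delta+1}$.
\end{itemize}
For $\alpha=0$ this is exactly the equality condition of Corollary \ref{coro:hsf01}.
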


For $0 < \alpha < 1$, the upper bound in Theorem \ref{thm:A_alpha bound} is sharper than that in Theorem \ref{thm:HLX20}. In fact, consider
$$
\begin{aligned}
f(x)&=x^2-(\alpha(\Delta+1)+\delta-1)x-2(1-\alpha)m+((1-\alpha)(n-1) +\alpha \Delta)\delta, \\
g(x)&=x^2-(1-\alpha)(\delta-1)x-\alpha\Delta^2-(1-\alpha)(2m-(n-1)\delta). 
\end{aligned}
$$
Then $g(x)-f(x)=\alpha(\Delta+\delta)(x-\Delta)\leq 0$ if $x\leq \Delta$. Since $\rho_\alpha\leq \Delta$, we have $g(\rho_\alpha)\leq f(\rho_\alpha)$.

\section*{Declaration of competing interest}

There is no competing interest.

\section*{Data availability}

No data was used for the research described in the article.

\section*{Acknowledgement}
The research of Zhen-Mu Hong is supported by Natural Science Foundation of China (No. 12371338) and Outstanding Youth Scientific Research Projects of Anhui Provincial Department of Education (No. 2022AH030073). The research of Zheng-Jiang Xia is supported by
Key Project in Natural Science Research of Anhui Provincial Department of Education (No. 2023AH050268). 
The research of Zhi Qiao is supported by Natural Science Foundation of China (No. 12071321).

\end{document}